\def\th@plain{%
  \upshape 
}
\renewenvironment{proof}[1][\proofname]{\par
  \pushQED{\qed}%
  \normalfont \topsep6\p@\@plus6\p@\relax
  \trivlist
  \item[\hskip\labelsep
        \bfseries
    #1\@addpunct{.}]\ignorespaces
}{%
  \popQED\endtrivlist\@endpefalse
}
\newtheorem{theorem}{Theorem}
\numberwithin{theorem}{section}
\newtheorem{lemma}{Lemma}
\newtheorem{corollary}{Corollary}
\newtheorem*{conjecture*}{Conjecture}
\theoremstyle{definition}
\newtheorem{remark}{Remark}
\newcounter{Hcase}
\newcounter{Hclaim}
\newcommand{\etal}{et~al.\ }
\newcommand{\eg}{e.g.,\ }
\def\int(#1){\mathrm{int}(#1)}
\def\ext(#1){\mathrm{ext}(#1)}
\def\Int(#1){\mathrm{Int}(#1)}
\def\Ext(#1){\mathrm{Ext}(#1)}
\def\ad(#1){\mathrm{ad}(#1)}
\def\mad(#1){\mathrm{mad}(#1)}
\def\la(#1){\mathrm{la}(#1)}
\newcommand{\girth}{\mathrm{girth}}
\begin{document}
\title{Light subgraphs in graphs with average degree at most four}
\author{Tao Wang\,\textsuperscript{a, b, }\footnote{{\tt Corresponding
author: wangtao@henu.edu.cn; iwangtao8@gmail.com}}\\
{\small \textsuperscript{a}Institute of Applied Mathematics}\\
{\small Henan University, Kaifeng, 475004, P. R. China}\\
{\small \textsuperscript{b}School of Mathematics and Statistics}\\
{\small Henan University, Kaifeng, 475004, P. R. China}}
\date{January 12, 2016}
\maketitle
\begin{abstract}
A graph $H$ is said to be {\em light} in a family $\mathfrak{G}$ of graphs if at least one member of $\mathfrak{G}$ contains a copy of $H$ and there exists an integer $\lambda(H, \mathfrak{G})$ such that each member $G$ of $\mathfrak{G}$ with a copy of $H$ also has a copy $K$ of $H$ such that $\deg_{G}(v) \leq \lambda(H, \mathfrak{G})$ for all $v \in V(K)$. In this paper, we study the light graphs in the class of graphs with small average degree, including the plane graphs with some restrictions on girth. 

We proved that: 
\begin{enumerate}
\item If $G$ is a graph with $\delta(G) = 2$, average degree less than $\frac{14}{5}$ and without $(2, 2, \infty)$-triangles, then $G$ has one of the following configurations: a $(2, 2, 13^{-}, 2)$-path, a $(2, 3^{-}, 3^{-})$-path and a $(4; 2, 2, 2, 3^{-})$-star. 
\item If $G$ is a plane graph with $\delta(G) \geq 2$ and face size at least $7$, then $G$ has a $(2, 2, 5^{-})$-path, or a $(2, 5^{-}, 2)$-path or a $(3, 3, 2, 3)$-path. 
\item If $G$ is a graph with $\delta(G) = 2$, average degree less than $3$ and without $(2, 2, \infty)$-triangles, then $G$ has one of the following configurations: a $(2, 3^{-}, 3^{-})$-path, a $(2, 2, \infty, 2)$-path, a $(2, 2, 4, 3)$-path, a $(4; 2, 2, 2, 6^{-})$-star, a $(4; 2, 2, 3, 5^{-})$-star, a $(4; 2, 3, 3, 3)$-star, a $(5; 2, 2, 2, 2, 2)$-star and a $(5; 2, 2, 2, 2, 3)$-star. 
\item If $G$ is a graph with $\delta(G) = 2$, average degree less than $3$ and without $(2, 2, \infty)$-triangles, then $G$ has one of the following configurations: a $(2, 3^{-}, 3^{-})$-path, a $(2, 2, \infty, 2)$-path, a $(2, 2, 4, 3)$-path, a $(4; 2, 2, 2, 6^{-})$-star, a $(2, 4, 3, 2)$-path, a $(2, 4, 3)$-triangle, a $(5; 2, 2, 2, 2, 2)$-star and a $(5; 2, 2, 2, 2, 3)$-star. 
\item If $G$ is a graph with $\delta(G) \geq 2$ and average degree less than $\frac{10}{3}$, then $G$ has one of the following configurations: a $(2, 2, \infty)$-path, a $(2, 3, 6^{-})$-path, a $(3, 3, 3)$-path, a $(2, 4, 3^{-})$-path and a $(2, 9^{-}, 2)$-path. 
\item If $G$ is a graph with $\delta(G) = 3$ and average degree less than $4$, then $G$ contains a $(4^{-}, 3, 7^{-})$-path, or a $(5, 3, 5)$-path or a $(5, 3, 6)$-path. 
\item If $G$ is a triangle-free normal plane map, then it contains one of the following configurations: a $(3, 3, 3)$-path, a $(3, 3, 4)$-path, a $(3, 3, 5, 3)$-path, a $(4, 3, 4)$-path, a $(4, 3, 5)$-path, a $(5, 3, 5)$-path, a $(5, 3, 6)$-path and a $(3, 4, 3)$-path. 
\end{enumerate}
\end{abstract}
\section{Introduction}
All the graphs in this paper are finite, undirected, and simple, unless otherwise stated. We use $V(G)$, $E(G)$, $\Delta(G)$, and $\delta(G)$ to denote the vertex set, the edge set, the maximum degree, and the minimum degree of $G$, respectively. Let $G$ be a plane graph, we use $F(G)$ to denote the face set of $G$. The degree of a vertex $v$ is denoted by $\deg(v)$, and the degree (or face size) of a face $\alpha$, is denoted by $\deg(\alpha)$. A $\kappa$-vertex is a vertex with degree $\kappa$. We denote a $\kappa^{+}$-vertex and $\kappa^{-}$-vertex for a vertex with degree at least $\kappa$ and at most $\kappa$, respectively. Similarly, we can define $\kappa$-faces, $\kappa^{+}$-faces and $\kappa^{-}$-faces. A $(a_{1}, a_{2}, \dots, a_{\kappa})$-path is a path $v_{1}v_{2}\dots v_{\kappa}$ with $\deg(v_{i}) = a_{i}$ for $1 \leq i \leq \kappa$. Similar to the vertices and faces, we can define the $(a_{1}^{-}, a_{2}^{-}, \dots, a_{\kappa}^{-})$-paths. A $(\kappa; a_{1}, a_{2}, \dots, a_{\kappa})$-star in $G$ is a star with center having degree $\kappa$ and all the other vertices with degree $a_{1}, a_{2}, \dots, a_{\kappa}$ in $G$. Let $\omega_{\kappa}$ be the minimum degree-sum of a path on $\kappa$ vertices.

A graph $H$ is light in a family $\mathfrak{G}$ of graphs if at least one member of $\mathfrak{G}$ contains a copy of $H$ and there exists an integer $\lambda(H, \mathfrak{G})$ such that each member $G$ of $\mathfrak{G}$ with a copy of $H$ also has a copy $K$ of $H$ such that $\deg_{G}(v) \leq \lambda(H, \mathfrak{G})$ for all $v \in V(K)$. Note that not every member of $\mathfrak{G}$ contains a copy of $H$ even if $H$ is light in $\mathfrak{G}$. For example, the graph $K_{5}$ is light in the family of graphs $\mathfrak{G} = \{\textrm{planar graphs}\} \cup \{K_{6}\}$. But almost all the results concerning light graph $H$ are subgraphs of each member $G$ in $\mathfrak{G}$. Inspired by this, we defined {\em strongly light graph} in \cite{MR3448602}, a graph $H$ is {\em strongly light} in a family of graphs $\mathfrak{G}$, if there exists an integer $\lambda$, such that every graph $G$ in $\mathfrak{G}$ contains a subgraph $K$ isomorphic to $H$ with $\deg_{G}(v) \leq \lambda$ for all $v \in V(K)$. 

A normal plane map (NPM) is a plane multigraph with minimum vertex degree at least three and minimum face size at least three (multiple edges and loops are allowed). As proved by Steinitz, a graph is polyhedral if and only if it is planar and 3-connected. 

Due to the Euler's formula, every planar graph has a $5^{-}$-vertex, that is, $\omega_{1} \leq 5$. In 1955, Kotzig \cite{MR0074837} showed that every $3$-connected planar graph has an edge of weight at most $13$, and this bound is best possible. Kotzig's result was generalized in various directions since then. Borodin \cite{MR977440} showed that every normal plane map has an edge of weight at most $13$, and this bound is best possible. Jendrol' presented a strong form of Borodin's result in \cite{MR1739913}, stating every normal plane map has a $(3, 10^{-})$-edge, a $(4, 7^{-})$-edge or a $(5, 6^{-})$-edge. 

Ando, Iwasaki and Kaneko \cite{Ando1993} showed that every $3$-connected planar graph has $\omega_{3} \leq 21$, and the bound is best possible. Jendrol' further showed that 
\begin{theorem}[Jendrol' \cite{MR1485387}]
Every $3$-connected planar graph has at least one of the following configurations: a $(10^{-}, 3, 10^{-})$-path, a $(7^{-}, 4, 7^{-})$-path, a $(6^{-}, 5, 6^{-})$-path, a $(3, 4^{-}, 15^{-})$-path, a $(3, 6^{-}, 11^{-})$-path, a $(3, 8^{-}, 5^{-})$-path, a $(3, 10^{-}, 3)$-path, a $(4, 4, 11^{-})$-path, a $(4, 5, 7^{-})$-path and a $(4, 7^{-}, 5^{-})$-path. 
\end{theorem}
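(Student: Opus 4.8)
The plan is to argue by contradiction using the discharging method. Suppose $G$ is a $3$-connected planar graph, fixed with a plane embedding, that contains none of the ten listed paths; in particular $\delta(G) \ge 3$ and every face is bounded by a cycle of length at least $3$. Assign to each vertex $v$ the initial charge $\mu(v) = \deg(v) - 6$ and to each face $f$ the charge $\mu(f) = 2\deg(f) - 6$. Using $\sum_{v} \deg(v) = \sum_{f} \deg(f) = 2|E(G)|$ together with Euler's formula $|V| - |E| + |F| = 2$, the total charge is $\sum_{x} \mu(x) = -12 < 0$. The strategy is to design local redistribution rules that preserve the total, after which the resulting charge $\mu^{*}$ is nonnegative on every vertex and every face, contradicting $\sum_{x} \mu^{*}(x) = -12$.

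First I would record what the forbidden configurations say about the neighbourhood of a low-degree vertex, i.e.\ one with negative charge (degree $3$, $4$, or $5$). Forbidding the $(10^{-}, 3, 10^{-})$-path forces every $3$-vertex to have at least two neighbours of degree $\ge 11$; forbidding the $(7^{-}, 4, 7^{-})$-path forces every $4$-vertex to have at least three neighbours of degree $\ge 8$; and forbidding the $(6^{-}, 5, 6^{-})$-path forces every $5$-vertex to have at least four neighbours of degree $\ge 7$. The remaining configurations---$(3, 4^{-}, 15^{-})$, $(3, 6^{-}, 11^{-})$, $(3, 8^{-}, 5^{-})$, $(3, 10^{-}, 3)$, $(4, 4, 11^{-})$, $(4, 5, 7^{-})$ and $(4, 7^{-}, 5^{-})$---refine this picture: each bounds, for a vertex of moderate degree acting as a hub, how many small neighbours of a given type it may simultaneously carry, and hence how large its charge obligations can be.

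With these constraints in hand I would use discharging rules of two kinds. Incident large faces first pass their surplus to the vertices on their boundary; then each high-degree vertex, now with positive charge, sends a prescribed amount to every adjacent $3$-, $4$-, or $5$-vertex, the amounts tuned to the thresholds above (for instance an $11^{+}$-neighbour gives a $3$-vertex enough that two such gifts cover its deficit of $3$, an $8^{+}$-neighbour gives a $4$-vertex enough that three gifts cover its deficit of $2$, and a $7^{+}$-neighbour gives a $5$-vertex enough that four gifts cover its deficit of $1$). The verification then splits into checking $\mu^{*}(v) \ge 0$ for each vertex degree and $\mu^{*}(f) \ge 0$ for each face: the small vertices are covered by their guaranteed supply of rich neighbours, $6$-vertices neither give nor receive and stay at $0$, and large faces keep a nonnegative remainder.

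The hard part will be the accounting on the \emph{donor} side in the regime where degrees are close to the stated thresholds (roughly degrees $7$ through $11$), since that is precisely where the inequalities are tight and where the extremal graphs witnessing best-possibility live. A naive rule already fails here: a $7$-vertex carries only charge $1$, yet it could in principle have up to seven $5$-neighbours each demanding a share, so one must invoke the refining configurations to cap the number of demanding low-degree neighbours a borderline vertex can have, and lean on the face charge to plug the remaining gaps. The delicate balancing of how much each neighbour type demands against how much each donor type can afford, taken over all combinations permitted by the forbidden-path constraints, is where the real work resides---and it is exactly this balance that forces the particular numerical bounds $10, 7, 6, 15, 11, \dots$ appearing in the statement.
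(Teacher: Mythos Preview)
The paper does not contain its own proof of this theorem: it is quoted in the introduction as a background result due to Jendrol' (1997) and is not among the theorems the paper establishes. So there is nothing in the paper to compare your proposal against.

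As for the proposal itself, what you have written is a plausible \emph{plan} for a discharging argument rather than a proof. The charge assignment $\mu(v)=\deg(v)-6$, $\mu(f)=2\deg(f)-6$ and the reading of the first three forbidden paths as ``at least two $11^{+}$-neighbours for a $3$-vertex'', etc., are fine and standard. But you have not actually specified the discharging rules, and you explicitly flag the donor-side verification for vertices of degree $7$ through $15$ as unresolved. That is precisely the substance of the proof: with ten interacting forbidden types and numerical thresholds as high as $15$, the case analysis on how many $3$-, $4$-, and $5$-neighbours a $k$-vertex can carry (and how the refining configurations such as $(3,4^{-},15^{-})$, $(4,4,11^{-})$, $(4,7^{-},5^{-})$ interact) is not a routine bookkeeping exercise one can wave through. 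Until you write down concrete transfer amounts and check every donor degree against every admissible neighbour profile, there is no proof here---only an outline of the method Jendrol' himself used.
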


The requirement of $3$-connectedness is essential for the finiteness of $\omega_{3}$. Lebesgue \cite{MR0001903} proved that every normal plane map with girth at least five has a $(3, 3, 3)$-path. Borodin \etal \cite{MR3106442} gave a tight description of $3$-paths in normal plane maps.

\begin{theorem}[Borodin, Ivanova, Jensen, Kostochka and Yancey \cite{MR3106442}]
Every normal plane map without two adjacent $3$-vertices lying in two common 3-faces has a $3$-path of the following types: $(3, 4^{-}, 11^{-})$, $(3, 7^{-}, 5^{-})$, $(3, 10^{-}, 4^{-})$, $(3, 15^{-}, 3)$, $(4, 4^{-}, 9^{-})$, $(6^{-}, 4^{-}, 8^{-})$, $(7^{-}, 4^{-}, 7^{-})$, $(6^{-}, 5^{-}, 6^{-})$. 
\end{theorem}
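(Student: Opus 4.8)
The plan is to argue by contradiction using a discharging argument. Suppose $G$ is a normal plane map that contains no two adjacent $3$-vertices lying in two common $3$-faces, yet contains none of the eight listed $3$-path types; fix a plane embedding and invoke Euler's formula $|V(G)| - |E(G)| + |F(G)| = 2$. First I would assign to each vertex $v$ the initial charge $\mu(v) = \deg(v) - 4$ and to each face $f$ the charge $\mu(f) = \deg(f) - 4$. Since $\sum_{v}\deg(v) = \sum_{f}\deg(f) = 2|E(G)|$, the total charge is $4|E(G)| - 4\bigl(|V(G)| + |F(G)|\bigr) = -8 < 0$. The only elements carrying negative charge are the $3$-vertices and $3$-faces, each with charge $-1$; every $4$-vertex and $4$-face is neutral, and all higher-degree elements are positive. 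A contradiction will follow once I exhibit rules that redistribute charge so that every vertex and every face finishes with nonnegative charge.

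Next I would translate the absence of each forbidden $3$-path into a local restriction on the neighborhoods of small-degree vertices, reading off the middle vertex of each type. Excluding $(3, 15^{-}, 3)$ forbids a $15^{-}$-vertex from having two $3$-neighbors on a common path; the three types $(3, 4^{-}, 11^{-})$, $(3, 7^{-}, 5^{-})$, $(3, 10^{-}, 4^{-})$ together pin down, for a vertex of degree between $4$ and $10$ that has a single $3$-neighbor, how large its remaining path-neighbors must be; and $(4, 4^{-}, 9^{-})$, $(6^{-}, 4^{-}, 8^{-})$, $(7^{-}, 4^{-}, 7^{-})$, $(6^{-}, 5^{-}, 6^{-})$ constrain the two flanking neighbors of $4^{-}$- and $5^{-}$-vertices. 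I would combine these with the hypothesis excluding two adjacent $3$-vertices in two common $3$-faces to control the faces incident to a $3$-vertex, since that exclusion is precisely what prevents a $3$-vertex from being starved of charge by its incident $3$-faces.

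Then I would design discharging rules that funnel charge toward the deficient $3$-vertices and $3$-faces: each $3$-face draws a fixed amount (say $\tfrac{1}{3}$, or a weighted share depending on the degrees around it) from each incident vertex, and each $3$-vertex is compensated by its larger neighbors along exactly the paths whose existence is forbidden, so that a neighbor of degree $d$ contributes according to a schedule calibrated to the thresholds $6,7,8,9,11,15$ appearing in the path list. The verification would then proceed element by element: I would show that every $3$-vertex and every $3$-face reaches charge $\geq 0$ using the structural restrictions of the previous step, and that every vertex of degree $d \geq 5$ and every face of size at least $4$ retains nonnegative charge after donating.

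The main obstacle I expect is calibrating the rules so that all eight path types are respected simultaneously with a single consistent set of constants. The delicate cases will be vertices of degree $4$, $5$, $6$, and $7$, which can sit as the middle of several near-forbidden paths at once, so that the constraints from $(4, 4^{-}, 9^{-})$, $(6^{-}, 4^{-}, 8^{-})$, $(7^{-}, 4^{-}, 7^{-})$ and $(6^{-}, 5^{-}, 6^{-})$ overlap and must be balanced against the charge such a vertex also owes to incident $3$-faces. The tightest accounting will occur at a $3$-face bounded by two $3$-vertices, where the ``two common $3$-faces'' exclusion must be shown to supply exactly the charge needed to close the deficit; getting this bookkeeping to break even, rather than leaving a residual negative charge, is what I anticipate to be the crux of the argument.
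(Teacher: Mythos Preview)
The paper does not contain a proof of this theorem. It is quoted in the introduction as a result of Borodin, Ivanova, Jensen, Kostochka and Yancey \cite{MR3106442}, purely as background for the paper's own contributions, and no argument for it appears anywhere in the text. Consequently there is nothing to compare your proposal against.

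On the substance of your outline: the discharging scheme with charges $\deg(v)-4$ and $\deg(f)-4$ is the natural framework, and your reading of the forbidden types as constraints on the neighbourhoods of middle vertices is the right way to exploit the hypotheses. But as written this is a plan rather than a proof: you never commit to explicit transfer amounts, and you explicitly flag the calibration of the rules around $4$-, $5$-, $6$-, $7$-vertices and around $3$-faces with two incident $3$-vertices as unresolved. That calibration is the entire content of such a theorem; until the constants are fixed and each case is checked, there is no proof. If you want to complete the argument you will need to consult the original paper \cite{MR3106442}, since the present paper offers no guidance on the specific rules used there.
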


Some of light graphs have been used to prove results on coloring problem, see \cite{MR977440}.  The light subgraphs have been extensively studied, we refer the readers to a recent survey \cite{MR3004475}.

Recently, Borodin and Ivanova \cite{MR3357780} gave a tight description of $3$-paths in triangle-free normal plane maps. 

\begin{theorem}[Borodin and Ivanova \cite{MR3357780}]\mbox{}
\begin{enumerate}
\item Every triangle-free normal plane map has a $(5^{-}, 3, 6^{-})$-path or a $(4^{-}, 3, 7^{-})$-path, which description is tight. 
\item Every triangle-free normal plane map has a $(5^{-}, 3, 6^{-})$-path or a $(3, 4^{-}, 3)$-path, which description is tight. 
\item Every triangle-free normal plane map has a $(3, 5^{-}, 3)$-path or a $(3, 4^{-}, 4^{-})$-path, which description is tight. 
\item Every triangle-free normal plane map has a $(3, 5^{-}, 3)$-path or a $(4^{-}, 3, 4^{-})$-path, which description is tight. \qed
\end{enumerate}
\end{theorem}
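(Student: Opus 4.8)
The plan is to establish all four parts by the discharging method built on Euler's formula. Fix a triangle-free normal plane map $G$; the normal-plane-map and triangle-free hypotheses together forbid faces of size $3$ or less, hence rule out loops and multiple edges, so $G$ is a simple plane graph with $\delta(G) \ge 3$ and every face of size at least $4$. Assign to each vertex and face $x$ the charge $\mu(x) = \deg(x) - 4$. Using $\sum_{v}\deg(v) = \sum_{f}\deg(f) = 2|E|$ together with Euler's formula $|V| - |E| + |F| = 2$, one gets $\sum_{x \in V(G) \cup F(G)} \mu(x) = -8$. Every face has charge $\deg(f) - 4 \ge 0$ and every vertex of degree at least $4$ has nonnegative charge, so the whole deficit sits on the $3$-vertices, each starting at $-1$. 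For each part I argue by contradiction: assuming $G$ avoids both listed configurations, I design rules that push charge onto the $3$-vertices so that every final charge is nonnegative, contradicting the negative total.

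The first real step is to convert each forbidden pair into a rigid description of the neighborhood of a $3$-vertex. For Part 1, a $3$-vertex $v$ with sorted neighbor degrees $d_1 \le d_2 \le d_3$ avoids both a $(5^-,3,6^-)$- and a $(4^-,3,7^-)$-path exactly when $d_1 \ge 6$, or $d_1 = 5$ and $d_2 \ge 7$, or $d_1 \le 4$ and $d_2 \ge 8$. These three cases are tailored to the rule ``each vertex $u$ of degree at least $4$ sends $\frac{\deg(u)-4}{\deg(u)}$ to every adjacent $3$-vertex'': such a $v$ then receives at least $3\cdot\frac{1}{3} = 1$, or $\frac{1}{5} + 2\cdot\frac{3}{7} > 1$, or $2\cdot\frac{1}{2} = 1$ respectively, while a $d$-vertex with $t \le d$ three-neighbors retains $(d-4) - t\cdot\frac{d-4}{d} = (d-4)\cdot\frac{d-t}{d} \ge 0$. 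Thus Part 1 closes almost mechanically. For Parts 2--4 the same translation shows, for instance, that forbidding a $(3,5^-,3)$-path means every vertex of degree at most $5$ has at most one $3$-neighbor, while forbidding a $(4^-,3,4^-)$- or $(3,4^-,4^-)$-path forces every $3$-vertex to have at least two neighbors of degree at least $5$.

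With those constraints in hand I would choose, separately for each part, sending amounts calibrated so that every $3$-vertex reaches $0$, guided by the worst neighborhood the part permits. In Part 4, for example, a $3$-vertex may have neighbor degrees $(4,6,6)$, so its two $6$-neighbors must each contribute $\frac{1}{2}$, which exceeds the painless $\frac{\deg-4}{\deg} = \frac{1}{3}$ that a $6$-vertex could afford to give to all of its neighbors. Resolving this tension is where the work lies, and where the planar, triangle-free structure becomes essential: since no two neighbors of a vertex are adjacent, the $3$-neighbors of a prospective donor are independent, and in a quadrangulation the $4$-faces around a medium-degree vertex tie the degrees of its neighbors to those of their second neighbors, which is what bounds how many $3$-neighbors can simultaneously demand the large contribution. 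I expect to need amortized rules — charging a donor per incident face rather than per $3$-neighbor — so that a vertex surrounded by many needy $3$-vertices is shielded by the faces it must also border.

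The main obstacle, then, is precisely this donor-nonnegativity check for the medium-degree vertices (degrees $5$, $6$, $7$) in Parts 3 and 4, where the surplus is scarce and a naive per-neighbor rule overdraws; this forces a delicate case analysis of the local faces and second neighborhood, leaning on triangle-freeness to limit the configurations. By contrast the face charges never need to be decreased and stay nonnegative throughout, and high-degree vertices are comfortable. Finally, each statement asserts that its description is tight, which I would verify separately by exhibiting, for each part, a triangle-free normal plane map — built from a suitable quadrangulation with a prescribed mix of $3$-vertices and large vertices — in which every $3$-path realizes one of the two extremal degree patterns, so that no weaker pair of configurations can replace the stated one.
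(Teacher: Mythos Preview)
The paper does not prove this theorem: it is quoted from Borodin and Ivanova and closed with a \qed, with no argument given. What the paper does supply are two separate results that recover Parts 1 and 2 as corollaries: the theorem on graphs with $\delta(G)=3$ and average degree less than $4$ (which yields Part 1, since a triangle-free NPM has average degree below $4$), and \autoref{THMLast} (which yields Part 2 as \autoref{LC}). Parts 3 and 4, and all four tightness assertions, are not addressed in this paper, so there is nothing here to compare your sketch for those against.

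For Part 1 your plan coincides with the paper's proof of the average-degree theorem almost verbatim: the single rule ``each $d$-vertex with $d\ge 4$ sends $\frac{d-4}{d}$ to every adjacent $3$-vertex'' is exactly the rule the paper uses, and your three-way case split on the sorted neighbor degrees $(d_1,d_2,d_3)$ is the same analysis. So that part is fine and matches.

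For Part 2 the comparison exposes a gap in your plan. Your proposal asserts that ``the face charges never need to be decreased and stay nonnegative throughout,'' but the paper's proof of \autoref{THMLast} does the opposite: rule (R4) has every $5^{+}$-face send $\frac{1}{2}$ to each incident $3$-vertex, rule (R1) has $6^{+}$-vertices send charge \emph{to} faces, and rule (R5) routes charge through $4$-faces to reach the needy $3$-vertex. The obstruction you yourself flag --- that a $6$-vertex cannot afford $\frac{1}{2}$ per $3$-neighbor out of its own surplus --- is precisely why vertex-to-vertex rules alone do not close the argument; faces must participate as donors, not merely as bookkeeping devices. Your late hedge toward ``amortized rules, charging a donor per incident face'' points in the right direction, but as written the proposal has no concrete rules for Parts 2--4 and rests on a structural claim (faces untouched) that the paper's own successful argument contradicts.
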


Jendrol' and Macekov{\'a} \cite{MR3279266} investigated the 3-paths in plane graphs with minimum degree at least two and some girth restrictions. 
\begin{theorem}[Jendrol' and Macekov{\'a} \cite{MR3279266}]
Every connected plane graph with minimum degree $\delta(G) \geq 2$ and girth at least $g$ has a $3$-path of one of the following types:
\begin{enumerate}
\item $(2, \infty, 2)$, $(2, 2, 6^{-})$, $(2, 3, 5^{-})$, $(2, 4, 4^{-})$ and $(3, 3^{-}, 3)$, if $g \geq 5$;
\item $(2, 2, \infty)$, $(2, 3, 5^{-})$, $(2, 4, 3^{-})$ and $(2, 5, 2)$, if $g = 6$;
\item $(2, 2, 6^{-})$, $(2, 3, 3^{-})$ and $(2, 4, 2)$, if $g = 7$;
\item $(2, 2, 5^{-})$ and $(2, 3, 3^{-})$, if $g \in \{8, 9\}$;
\item $(2, 2, 3^{-})$ and $(2, 3, 2)$, if $g \geq 10$;
\item $(2, 2, 2)$ if $g \geq 16$. \qed
\end{enumerate}
\end{theorem}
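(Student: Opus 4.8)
The first thing I would notice is that, for the purpose of finding a light $3$-path, the planarity-plus-girth hypothesis amounts to a bound on the average degree together with the absence of short cycles. A connected plane graph with girth at least $g$ satisfies $|E(G)| \leq \tfrac{g}{g-2}\bigl(|V(G)|-2\bigr)$, and this inequality is inherited by every subgraph, so $\mathrm{mad}(G) < \tfrac{2g}{g-2}$; moreover girth at least $5$ forces $G$ to be triangle-free. The six thresholds $g = 5$, $g = 6$, $g = 7$, $g \in \{8,9\}$, $g = 10$, $g \geq 16$ correspond to the average-degree bounds $\tfrac{2g}{g-2}$, namely $\tfrac{10}{3}$, $3$, $\tfrac{14}{5}$, $\tfrac{8}{3}$ or $\tfrac{18}{7}$, $\tfrac{5}{2}$, $\tfrac{16}{7}$. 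My plan is therefore to prove each case by the vertex-discharging technique for graphs of bounded average degree that underlies the present paper, letting planarity enter only through the edge-count inequality above; the descriptions obtained here differ from the paper's own average-degree theorems, so this is a parallel argument rather than a literal corollary.

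To keep the argument self-contained I would prove the underlying average-degree statement by discharging. Let $G$ be a counterexample with $\delta(G) \geq 2$ and average degree below $\tfrac{2g}{g-2}$ that contains none of the listed $3$-paths. Assign to each vertex $v$ the charge $\mu(v) = \deg(v) - \tfrac{2g}{g-2}$, so that $\sum_{v} \mu(v) < 0$. The only vertices with negative charge are the $2$-vertices and the $3$-vertices, and the whole role of the forbidden $3$-paths is to dictate the local structure around them: forbidding $(2, \infty, 2)$ says that no vertex has two $2$-neighbors, while forbidding a path such as $(2, 2, 6^{-})$ forces the vertex stabilizing a pair of adjacent $2$-vertices to have large degree. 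These constraints are precisely what guarantees that each needy vertex sits next to neighbors rich enough to subsidize it.

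The core of the proof is then a short list of discharging rules, tuned to each value of $g$: high-degree vertices send a fixed amount of charge across each edge to an adjacent $2$- or $3$-vertex, with the quanta chosen so that the charge extracted from any donor never exceeds its surplus. One checks, case by case on the degree of a vertex and on whether its neighbors are themselves $2$-vertices, that after discharging every vertex has nonnegative charge, contradicting $\sum_{v} \mu(v) < 0$. Because larger girth means a smaller average-degree bound and hence more surplus per high-degree vertex, weaker structural restrictions suffice as $g$ grows, which explains why the forbidden list shrinks; the threshold $g \geq 16$ is exactly the point at which the single type $(2,2,2)$ becomes unavoidable on its own.

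The main obstacle will be the verification in the tight small-girth cases $g = 5, 6, 7$, where $\tfrac{2g}{g-2}$ is largest and each donor has the least charge to spare. There the discharging rules must branch according to how many $2$-neighbors a vertex has, whether two $2$-vertices are consecutive, and the exact degrees of the neighbors that stabilize them; every subcase has to be shown nonnegative, and the sharpness asserted in each part must be confirmed by an explicit extremal plane graph realizing only the forbidden degree patterns. I expect that organizing the argument around the average-degree charge $\deg(v) - \tfrac{2g}{g-2}$, rather than the planar Euler charge, is the cleanest route, since it isolates exactly the arithmetic that a face-based discharging would otherwise bury inside the large positive charges carried by the big faces.
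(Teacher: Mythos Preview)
The paper does not supply its own proof of this statement: the Jendrol'--Macekov\'a theorem is quoted from the cited reference and closed with a \qed, meaning it is imported as background rather than re-proved here. There is therefore nothing in the paper to compare your argument against.

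Your overall plan---replace the girth hypothesis by the bound $\mathrm{mad}(G) < \tfrac{2g}{g-2}$ and run a vertex discharging with initial charge $\deg(v) - \tfrac{2g}{g-2}$---is exactly the philosophy the present paper promotes for its own results, and indeed item~2 of Jendrol'--Macekov\'a (girth $6$) is recovered in the paper as a corollary of the average-degree \autoref{MAD3}. What you have written, however, is a programme rather than a proof: you never exhibit discharging rules for any of the six regimes, and you do not verify a single final charge. The genuinely delicate point is the one you flag yourself, the case $g \geq 5$. There the configuration list $(2, \infty, 2)$, $(2, 2, 6^{-})$, $(2, 3, 5^{-})$, $(2, 4, 4^{-})$, $(3, 3^{-}, 3)$ differs both from the paper's \autoref{MAD10/3} and from the average-degree $3$-path theorem of Jendrol', Macekov\'a, Montassier and Sot\'ak cited in the introduction; it is not obvious a priori that this particular list can be forced from $\mathrm{mad}(G) < \tfrac{10}{3}$ alone, without ever invoking the face structure of a plane embedding. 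To turn the proposal into a proof you would have to write down and check explicit rules for each girth regime, and in particular either produce a vertex-only discharging that yields the $g \geq 5$ list or acknowledge that this item may require a face-based argument as in the original Jendrol'--Macekov\'a paper.
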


Aksenov, Borodin and Ivanova \cite{MR3414174} gave some results on the weight of $3$-paths in plane graphs with minimum degree at least two and girth at least six. 
\begin{theorem}[Aksenov, Borodin and Ivanova \cite{MR3414174}]
Every plane graph $G$ with $\delta(G) \geq 2$ and girth at least six has a $(2, 2, \infty, 2)$-path or $\omega_{3} \leq 9$, and the bound is tight.
\end{theorem}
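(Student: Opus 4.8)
The plan is to proceed by contradiction through a discharging argument. Suppose $G$ is a counterexample, which we may take to be connected; then $G$ has girth at least $6$, $\delta(G)\geq 2$, contains no $(2,2,\infty,2)$-path, and satisfies $\omega_{3}(G)\geq 10$, so that every $3$-path has weight at least $10$. First I would record the local structure forced by these two hypotheses. Since $(2,2,k)$-paths need $k\geq 6$, every maximal run of consecutive $2$-vertices has length at most two, and the outer neighbour of a run of two $2$-vertices is a $6^{+}$-vertex; the absence of a $(2,2,\infty,2)$-path then forces such an outer vertex to have no second $2$-neighbour. Since $(2,k,2)$-paths need $k\geq 6$, every $5^{-}$-vertex has at most one $2$-neighbour. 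Since $(2,3,k)$-paths need $k\geq 5$ while $(2,3,2)$ is too light, a $3$-vertex adjacent to a $2$-vertex has exactly one $2$-neighbour and its other two neighbours are $5^{+}$-vertices. Finally, an isolated $2$-vertex $v$ with neighbours $u,w$ satisfies $\deg(u)+\deg(w)\geq 8$, so at most one of $u,w$ is a $3$-vertex, and if one is a $3$-vertex then the other is a $5^{+}$-vertex.

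For the discharging I would set $\mu(v)=2\deg(v)-6$ and $\mu(f)=\deg(f)-6$; Euler's formula gives total charge $-12$, and girth at least $6$ makes every face-charge non-negative, so faces never participate. The only vertices in deficit are $2$-vertices (charge $-2$); $3$-vertices start at $0$ and $4^{+}$-vertices are in surplus. The rules move charge only among vertices, organised by recipient: a $2$-vertex in a run of two collects $2$ from its $6^{+}$ outer neighbour; an isolated $2$-vertex with two $4^{+}$-neighbours collects $1$ from each; and an isolated $2$-vertex adjacent to a $3$-vertex collects $1$ from its $5^{+}$-neighbour and $1$ forwarded by the adjacent $3$-vertex, which itself draws $\frac{1}{2}$ from each of its two $5^{+}$-neighbours. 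In this way every $2$-vertex ends at $0$ and every such ``conduit'' $3$-vertex breaks even.

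The heart of the argument, and the main obstacle, is verifying that no sender goes negative. The delicate case is a $2$-vertex adjacent to a $3$-vertex: the $3$-vertex carries charge $0$ and cannot pay directly, which is exactly what the conduit rule handles, routing charge from the $3$-vertex's two $5^{+}$-neighbours; here I use that, by girth, these are distinct from the $2$-vertex's own large neighbour, so nothing is spent twice. A $4$-vertex gives $1$ to its unique isolated $2$-neighbour out of surplus $2$ and feeds no conduit, so it is trivially safe; the binding inequalities should occur at $5$- and $6$-vertices. A $5$-vertex has surplus $4$, at most one $2$-neighbour (necessarily isolated, since a run needs a $6^{+}$ outer vertex) and at most four conduit neighbours, so it spends at most $1+4\cdot\frac{1}{2}=3$. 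A $6$-vertex has surplus $6$ and spends at most $6$ (either $2$ on a single run-neighbour plus $\frac{1}{2}$ on each of its remaining neighbours, giving $4.5$, or $1$ per isolated $2$-neighbour and $\frac{1}{2}$ per conduit neighbour, giving at most $6$); degrees above $6$ leave increasing slack.

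Since all final charges are non-negative while the total charge is $-12$, this yields the required contradiction and proves the first assertion. For tightness I would exhibit an explicit planar family of girth $6$ with $\delta=2$ and no $(2,2,\infty,2)$-path in which the lightest $3$-path has weight exactly $9$ (for instance one realising a $(2,3,4)$- or $(2,2,5)$-path while every other $3$-path stays at weight at least $9$), showing that the constant $9$ cannot be lowered.
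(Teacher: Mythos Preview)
Your discharging argument is correct: the structural observations you extract from $\omega_{3}\geq 10$ and the absence of a $(2,2,\infty,2)$-path are exactly right, your rules cover every $2$-vertex, and the sender checks at degrees $4$, $5$, $6$, and $\geq 7$ all go through as you say (the tightest case really is a $6$-vertex with six isolated $2$-neighbours, which lands at zero). The paper, however, does not prove this statement directly. It instead establishes a finer structural result (\autoref{MAD3}): any graph with $\delta=2$, average degree below $3$, and no $(2,2,\infty)$-triangle contains one of eight specific configurations. The corollary then falls out because a plane graph of girth at least~$6$ has average degree below~$3$, and every configuration on that list other than the $(2,2,\infty,2)$-path visibly contains a $3$-path of weight at most~$9$. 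So the paper's discharging (vertex charge $\deg(v)-3$, with $4^{+}$-vertices feeding $3$-vertices in graded amounts $\tfrac14$ or $\tfrac12$) is aimed at a stronger conclusion and is correspondingly more intricate; your argument is tailored to the bare inequality $\omega_3\leq 9$ and is shorter and more self-contained for that purpose, at the cost of not yielding the explicit list of unavoidable local configurations. For tightness you should actually write down a construction rather than gesture at one; the standard example (also cited in the paper) is to insert two $2$-vertices on every edge of the icosahedron, which has girth $\geq 6$, no $(2,2,\infty,2)$-path, and $\omega_3=9$ via its $(2,5,2)$-paths.
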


\begin{theorem}[Aksenov, Borodin and Ivanova \cite{MR3414174}]
Every plane graph $G$ with $\delta(G) \geq 2$ and girth at least seven has $\omega_{3} \leq 9$. 
\end{theorem}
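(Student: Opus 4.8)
The plan is to argue by contradiction using discharging. Suppose $G$ is a counterexample: a plane graph with $\delta(G)\ge 2$, girth at least $7$, and $\omega_{3}\ge 10$, i.e. every $3$-path has degree-sum at least $10$. Since $\delta(G)\ge 2$ forces a cycle, I may pass to a connected component containing one, so that Euler's formula applies in the combined form $\sum_{v}(2\deg(v)-6)+\sum_{f}(\deg(f)-6)=-12$. Accordingly I would give each vertex the charge $\mu(v)=2\deg(v)-6$ and each face the charge $\mu(f)=\deg(f)-6$. This is the natural choice for girth $7$: because every face has size at least $7$, each $\mu(f)\ge 1$; every $3^{+}$-vertex has $\mu\ge 0$; and the entire deficit is localized on the $2$-vertices, each carrying charge exactly $-2$. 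The aim is to redistribute charge so that every vertex and face ends nonnegative, contradicting the total $-12<0$.

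First I would extract the local structure forced by $\omega_{3}\ge 10$:
\begin{enumerate}
\item there is no $(2,2,2)$-path, so every maximal thread of consecutive $2$-vertices has length at most $2$;
\item if $uv$ is a thread of two $2$-vertices, then the outer neighbor $a$ of $u$ and $b$ of $v$ satisfy $\deg(a),\deg(b)\ge 6$;
\item if $v$ is an isolated $2$-vertex with neighbors $a,b$, then $\deg(a)+\deg(b)\ge 8$, so at least one of them has degree $\ge 4$.
\end{enumerate}
Each follows by applying the weight bound to the appropriate $3$-path, and girth at least $7$ guarantees that all these paths are genuine, with no coincidences coming from short cycles.

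The discharging then feeds charge to each $2$-vertex from its two incident faces and from its adjacent heavy vertices. A $2$-vertex lies on at most two faces and, by (2)--(3), is flanked either by two vertices of combined degree $\ge 8$ or by a neighbor of degree $\ge 6$. I would use rules of the form: every face sends a fixed small amount to each incident $2$-vertex, and every vertex of degree $d\ge 4$ sends an amount $\alpha(d)$, increasing in $d$, to each adjacent $2$-vertex, the constants being calibrated so that an isolated $(4,4)$- or $(3,5^{+})$-configuration and a $2$-thread each collect the missing $+2$. Verifying nonnegativity at the $2$-vertices is then a short case check driven by (1)--(3).

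The hard part will be the reverse direction: ensuring that no donor is driven negative. The tightest cases are the size-$7$ (and size-$8$) faces, whose charge is only $1$ (respectively $2$) although they may border several $2$-vertices, and the vertices of degree $4$ and $5$, whose surpluses $2$ and $4$ are small against the number of $2$-vertices they may neighbor. The crux is to exploit that clusters of $2$-vertices \emph{force} heavy non-$2$ neighbors: by (2) a pair of $2$-vertices on a face forces two degree-$\ge 6$ vertices on that same face, so a face cannot be both small and crowded with $2$-vertices without acquiring heavy boundary vertices that absorb the cost. Choosing $\alpha(d)$ and the per-face amount so that every configuration balances exactly at boundary size $7$ is where the real work lies, and I expect to need a second discharging pass letting overloaded degree-$4$ and degree-$5$ vertices recover charge from the degree-$\ge 6$ vertices mandated by (2)--(3). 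As a cross-check, since girth at least $7$ implies girth at least $6$, the earlier Aksenov--Borodin--Ivanova theorem reduces the problem to the case where $G$ contains a $(2,2,\infty,2)$-path; under $\omega_{3}\ge 10$ such a path forces its central vertex to have degree $\ge 6$, and one might instead try to contradict this added rigidity directly, though I would still expect the self-contained discharging above to be the principal route.
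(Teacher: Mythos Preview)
Your outline is plausible but it is not yet a proof: you never fix the face-to-$2$-vertex amount or the functions $\alpha(d)$, and you yourself say that balancing the $7$- and $8$-faces and the degree-$4$ and $5$ vertices ``is where the real work lies.'' Two concrete soft spots: first, your structural facts (1)--(3) extracted from $\omega_{3}\ge 10$ concern only neighborhoods of $2$-vertices; you never use, e.g., that $(3,3,3)$- and $(2,3,4^{-})$-paths are forbidden, and some such information about $3$-vertices is needed to close the $7$-face case with your charges. Second, the proposed ``second pass'' in which overloaded $4$- and $5$-vertices recover charge from $6^{+}$-vertices ``mandated by (2)--(3)'' is not well-founded: (2)--(3) only force large degrees next to $2$-vertices, so a $4$-vertex whose $2$-neighbors are isolated need only see degree $\ge 4$ on the far side, not $\ge 6$.

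The paper takes a different and cleaner route. It does not attack $\omega_{3}\le 9$ directly; instead it proves the sharper structural Theorem~\ref{Girth7}: every plane graph with $\delta\ge 2$ and all faces of size $\ge 7$ contains a $(2,2,5^{-})$-path, a $(2,5^{-},2)$-path, or a $(3,3,2,3)$-path, and each of these visibly yields a $3$-path of weight $\le 9$. The discharging there uses the other natural Euler combination, $\deg(v)-4$ for vertices and $\deg(\alpha)-4$ for faces (total $-8$), with just three rules: each $2$-vertex receives $1$ from each incident face, each $3$-vertex receives $\tfrac13$ from each incident face, and each $6^{+}$-vertex sends $\tfrac13$ to each incident face. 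All vertices are then trivially nonnegative, and the entire case analysis sits on the face side ($7$-, $8$-, $9^{+}$-faces), where the forbidden $(3,3,2,3)$-path is exactly what makes the tight $7$-face case with two $2$-vertices and no $6^{+}$-vertex close. Compared to your scheme, this avoids any vertex-to-vertex transfers or second pass; if you want to salvage your $2\deg(v)-6$, $\deg(f)-6$ setup, you will at minimum need to bring in the $3$-vertex constraints that $\omega_{3}\ge 10$ provides and redo the $7$-face bookkeeping accordingly.
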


Many of the results on light subgraphs are about planar graphs and proved by the discharging method, and few results are about other graph classes. The average degree of a graph $G$ is the value $\frac{2|E(G)|}{|V(G)|}$, and the maximum average degree $\mad(G)$ of a graph $G$ is the maximum value of the average degree of every subgraph $H$ of $G$, that is, 
\[
\mad(G) = \max_{H \subseteq G}\left\{\frac{2|E(H)|}{|V(H)|}\right\}. 
\] 
The maximum average degree is a measure of the sparseness of graphs, which is used to give global and local structures. Jendrol' \etal \cite{JendrolMacekovaMontassierEtAl2016} gave the following results on $3$-paths in terms of average degree.
\begin{theorem}
Let $G$ be a graph with minimum degree at least two and average degree less than $m$. Then the graph $G$ contains a $3$-path of one of the following types:
\begin{enumerate}[label = (\roman*)]
\item $(2, \infty, 2)$, $(2, 8^{-}, 3^{-})$, $(4^{-}, 3^{-}, 5^{-})$ if $m = \frac{15}{4}$;
\item $(2, \infty, 2)$, $(2, 5^{-}, 3^{-})$, $(3^{-}, 2, 4^{-})$, $(3, 3, 3)$ if $m = \frac{10}{3}$;
\item $(2, 2, \infty)$, $(2, 3^{-}, 4^{-})$, $(2, 5^{-}, 2)$ if $m = 3$;
\item $(2, 2, 13^{-})$, $(2, 3, 3)$, $(2, 4^{-}, 2)$ if $m = \frac{14}{5}$;
\item $(2, 2, \kappa^{-})$, $(2, 3, 2)$ if $m =\frac{3(\kappa+1)}{\kappa+2}$ for $4 \leq \kappa \leq 7$;
\item $(2, 2, 3^{-})$ if $m = \frac{12}{5}$, and 
\item $(2, 2, 2)$ if $m = \frac{9}{4}$. \qed
\end{enumerate}
\end{theorem}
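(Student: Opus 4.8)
The plan is to prove each of the seven parts by a discharging argument carried entirely on the vertices; since the statement concerns abstract graphs, no faces are available, so all the bookkeeping must be done with vertex charges. Fix the threshold $m$ together with its list $\mathcal{L}$ of forbidden $3$-paths, and suppose for contradiction that $G$ has $\delta(G)\ge 2$, average degree less than $m$, yet contains no path of a type in $\mathcal{L}$. Because the average degree of $G$ is a weighted mean of the average degrees of its components, some component again has average degree less than $m$, inherits $\delta\ge 2$, and still avoids $\mathcal{L}$, so I may assume $G$ is connected. Assign each vertex the initial charge $\mu(v)=\deg(v)-m$, so that $\sum_{v}\mu(v)=2|E(G)|-m\,|V(G)|<0$ by the hypothesis on the average degree. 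I will then redistribute charge, conserving the total, so that the final charge $\mu'(v)$ is nonnegative for every $v$; summing yields $0\le\sum_v\mu'(v)=\sum_v\mu(v)<0$, a contradiction.

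The only vertices with negative initial charge are the $2$-vertices, together with the $3$-vertices in the two cases where $m>3$ (parts (i) and (ii)); every vertex of degree exceeding $m$ is a potential donor. The first step is to translate $\mathcal{L}$ into local structure around the deficient vertices. Calling a maximal run of consecutive $2$-vertices a \emph{thread} and its two outer neighbours its \emph{supports}, the types of the form $(2,2,\dots)$ cap thread lengths (forbidding $(2,2,\infty)$ makes the $2$-vertices independent, so every thread has length $1$; forbidding $(2,2,2)$ caps the length at $2$), whereas the types $(2,\cdot,2)$ and $(2,\cdot,\cdot)$ force each support either to have large degree or to carry few $2$-neighbours (forbidding $(2,\infty,2)$ makes every vertex adjacent to at most one $2$-vertex; forbidding $(2,5^-,2)$ forces any vertex with two $2$-neighbours to have degree at least $6$; forbidding $(2,3,2)$ limits a $3$-vertex to a single $2$-neighbour). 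The middle-heavy types, such as $(4^-,3^-,5^-)$ or $(2,8^-,3^-)$, bound the degrees of the neighbours of a small-degree vertex. These deductions turn the combinatorial hypothesis into quantitative statements of the shape ``a $2$-vertex in a length-$k$ thread has supports of degree at least $d_k$,'' which are exactly what is needed to fix the rules.

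The rules push charge from donors into the threads at a rate $\beta$ tuned to $m$, and this rate typically differs between length-$1$ and length-$2$ threads: a length-$2$ thread feeds each of its $2$-vertices from a single support, whereas a length-$1$ thread feeds its lone $2$-vertex from two supports, so the per-edge rate must be doubled in the former case. Once $\beta$ is fixed, checking that each recipient reaches charge $0$ is routine, and the real content is showing that no donor is driven negative: a degree-$d$ support ends with charge at least $d-m-t\beta$, where $t$ counts its incident threads, and one must verify that the structural bounds above keep this nonnegative in every admissible local pattern. This is where the parameterisation of part (v) appears: with $m=\frac{3(\kappa+1)}{\kappa+2}$ a support of degree $\kappa+1$ carrying only length-$2$ threads balances at exactly $0$, while a $3$-support (which by $(2,2,\kappa^-)$ can only carry length-$1$ threads and by $(2,3,2)$ at most one of them) ends with charge $\frac{7-\kappa}{2(\kappa+2)}$, nonnegative precisely for $\kappa\le 7$; its failure at $\kappa=8$ is what caps the family, while for $\kappa\le 3$ the list collapses into the single types of (vi) and (vii). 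Each part uses its own $\beta$ and its own case split, but all follow this template.

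I expect the crux to be the donor check in the three cases whose list contains an $\infty$, namely (i), (ii) and (iii). There the troublesome middle vertex may have arbitrarily large degree, so its effect cannot be controlled through a bounded number of threads, and a purely nearest-neighbour rule fails outright: in part (iii) a $3$-support has charge exactly $0$ and can pay nothing, yet the forbidden type $(2,3^-,4^-)$ guarantees that its remaining neighbours have degree at least $5$, so the rule must route compensating charge to it from distance two. Balancing the two directions of flow against each other, charge going outward into threads and charge coming inward to rescue starved low-degree supports, is the delicate part, and it forces a fairly long enumeration of the degree patterns that can surround a $3$- or $4$-vertex (in part (ii) such a vertex is constrained at once by $(2,\infty,2)$, $(3^-,2,4^-)$ and $(3,3,3)$) before any rule can be certified safe. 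Tightness, that every listed type is genuinely needed, is a separate matter, handled by exhibiting for each type a graph of average degree $m$ in which that type is the only configuration from the list that occurs.
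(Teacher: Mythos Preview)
The paper does not prove this theorem. It is quoted from Jendrol', Macekov{\'a}, Montassier and Sot\'{a}k \cite{JendrolMacekovaMontassierEtAl2016} as background in the introduction, and the trailing \qed\ signals that it is stated without proof here. There is therefore nothing in this paper to compare your proposal against.

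That said, your plan is the right one in spirit and matches how the paper itself handles its own, closely related results (Theorems~\ref{MADTHM}, \ref{MAD14/5}, \ref{MAD3}, \ref{MAD10/3}): pure vertex discharging with initial charge $\deg(v)-m$, charge pushed from thread endpoints into the $2$-vertices, and secondary transfers among $3^{+}$-vertices to rescue starved low-degree supports. Your computation for part~(v), showing that a $3$-support finishes with charge $\frac{7-\kappa}{2(\kappa+2)}$ and that a $(\kappa+1)$-support saturated by length-$2$ threads balances at zero, is correct and explains the range $4\le\kappa\le 7$. One caution for part~(iii): a $3$-vertex adjacent to a $2$-vertex need not have both remaining neighbours of degree at least $5$; the absence of $(2,3^-,4^-)$ only forces the \emph{other neighbour of the $2$-vertex} to have degree at least $5$, not the other neighbours of the $3$-vertex. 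The rescue charge for such a $3$-vertex therefore has to come along the thread (from the far support of the $2$-vertex), not from the $3$-vertex's own $3^{+}$-neighbours; compare the rule structure in the paper's proof of Theorem~\ref{MAD3}.
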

In this paper, we give some light subgraphs in several classes of graphs with the conditions on (maximum) average degree, namely \autoref{MADTHM}--\ref{THMLast}, which refines some results mentioned in the above. 

\section{Light subgraphs}
We first give a lemma on an inequality which is used in the next theorem. 
\begin{lemma}\label{L}
\begin{equation*}
\kappa - (2 + 2 \rho) - 2\kappa\rho \geq 0, 
\end{equation*}
if one of the following holds:
\begin{enumerate}[label = (\arabic*)]
\item\label{a} $\kappa \geq 4$ and $\rho \leq \frac{1}{5}$; 
\item\label{b} $\kappa \geq 5$ and $\rho \leq \frac{1}{4}$; 
\item\label{c} $\kappa \geq 6$ and $\rho \leq \frac{2}{7}$; 
\item\label{d} $\kappa \geq 7$ and $\rho \leq \frac{5}{16}$; 
\item\label{e} $\kappa \geq 8$ and $\rho \leq \frac{1}{3}$. \qed
\end{enumerate}
\end{lemma}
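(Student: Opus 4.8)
The plan is to treat the left-hand side as a linear function of $\rho$ and isolate that parameter. First I would collect the $\rho$-terms, writing
\[
\kappa - (2 + 2\rho) - 2\kappa\rho = (\kappa - 2) - 2\rho(\kappa + 1),
\]
so that the desired inequality is equivalent to $2\rho(\kappa + 1) \le \kappa - 2$. Since $\kappa \ge 4$ in every case, we have $\kappa + 1 > 0$, and dividing preserves the direction of the inequality, giving the clean equivalent form
\[
\rho \le g(\kappa) := \frac{\kappa - 2}{2(\kappa + 1)}.
\]

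Next I would record that $g$ is increasing on $(-1, \infty)$. This becomes transparent after rewriting $\frac{\kappa-2}{\kappa+1} = 1 - \frac{3}{\kappa+1}$, so that
\[
g(\kappa) = \frac{1}{2}\left(1 - \frac{3}{\kappa + 1}\right),
\]
and the term $-\frac{3}{\kappa+1}$ manifestly increases as $\kappa$ grows. Consequently, for a fixed lower bound $\kappa \ge \kappa_{0}$ it suffices to verify $\rho \le g(\kappa_{0})$, because then $\rho \le g(\kappa_{0}) \le g(\kappa)$ for every admissible $\kappa$.

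It then remains to check that in each of the five cases the stated upper bound on $\rho$ equals (hence is at most) $g(\kappa_{0})$ at the threshold value $\kappa_{0}$. Direct substitution gives $g(4) = \tfrac{2}{10} = \tfrac{1}{5}$, $g(5) = \tfrac{3}{12} = \tfrac{1}{4}$, $g(6) = \tfrac{4}{14} = \tfrac{2}{7}$, $g(7) = \tfrac{5}{16}$, and $g(8) = \tfrac{6}{18} = \tfrac{1}{3}$, matching cases \ref{a}--\ref{e} respectively. In each case the hypothesis $\rho \le g(\kappa_{0})$ combined with monotonicity yields $\rho \le g(\kappa)$, which is precisely the required inequality.

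There is no substantial obstacle here; the lemma is a routine computation once $\rho$ has been isolated. The only point demanding care is the direction of the inequality when dividing by $2(\kappa+1)$, which is legitimate exactly because $\kappa + 1 > 0$ throughout. The fact that each bound on $\rho$ is attained exactly at its threshold $\kappa_{0}$ explains the particular constants $\tfrac{1}{5}, \tfrac{1}{4}, \tfrac{2}{7}, \tfrac{5}{16}, \tfrac{1}{3}$ appearing in the five cases, and shows that the hypotheses are the sharpest possible of their form.
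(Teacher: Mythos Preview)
Your argument is correct: rewriting the inequality as $\rho \le \frac{\kappa-2}{2(\kappa+1)}$, observing monotonicity in $\kappa$, and checking the five threshold values is exactly the routine verification the lemma requires. The paper itself offers no proof (the \qed after the enumeration signals it is left to the reader), so your write-up simply fills in the elementary computation the authors omitted.
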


\begin{figure}%
\centering
\subcaptionbox{\label{fig:subfig:a}}{\includegraphics[width = 0.15\textwidth]{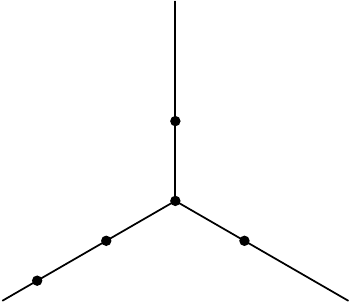}}\hfill~
\subcaptionbox{\label{fig:subfig:b}}{\includegraphics[width = 0.15\textwidth]{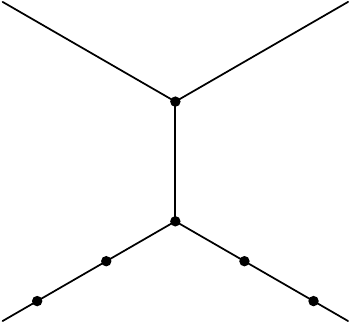}}\hfill~
\subcaptionbox{\label{fig:subfig:c}}{\includegraphics[width = 0.15\textwidth]{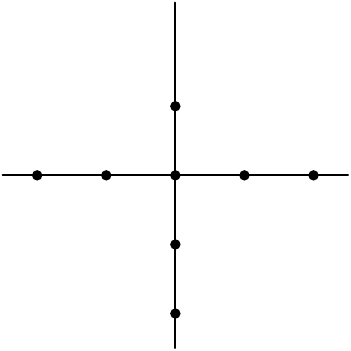}}\hfill~
\subcaptionbox{\label{fig:subfig:d}}{\includegraphics[width = 0.15\textwidth]{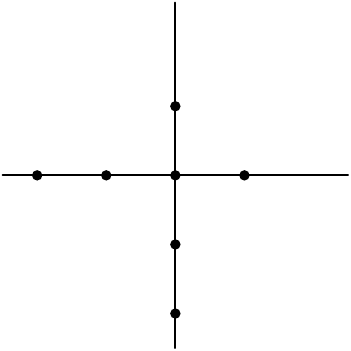}}\hfill~
\subcaptionbox{\label{fig:subfig:e}}{\includegraphics[width = 0.15\textwidth]{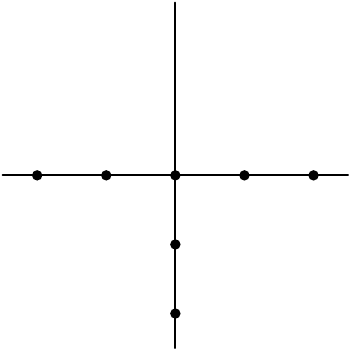}}\hfill~
\subcaptionbox{\label{fig:subfig:f}}{\includegraphics[width = 0.15\textwidth]{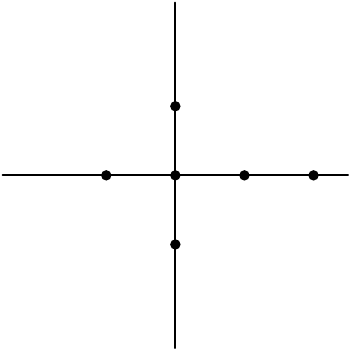}}\\
\subcaptionbox{\label{fig:subfig:g}}{\includegraphics[width = 0.15\textwidth]{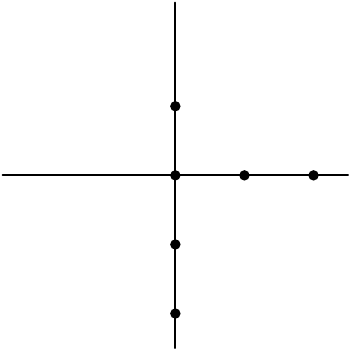}}\hfill~
\subcaptionbox{\label{fig:subfig:h}}{\includegraphics[width = 0.15\textwidth]{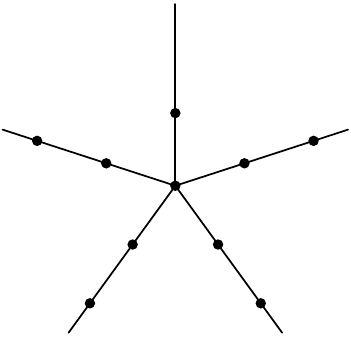}}\hfill~
\subcaptionbox{\label{fig:subfig:i}}{\includegraphics[width = 0.15\textwidth]{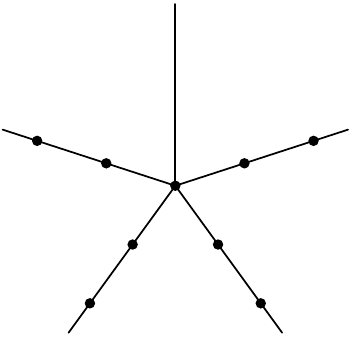}}\hfill~
\subcaptionbox{\label{fig:subfig:j}}{\includegraphics[width = 0.15\textwidth]{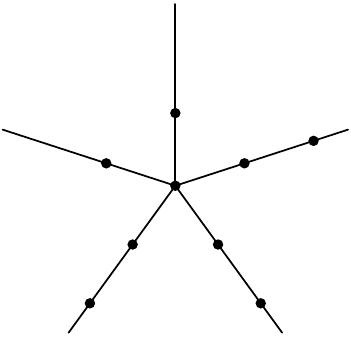}}\hfill~
\subcaptionbox{\label{fig:subfig:k}}{\includegraphics[width = 0.15\textwidth]{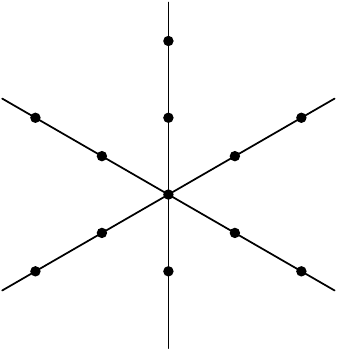}}\hfill~
\subcaptionbox{\label{fig:subfig:l}}{\includegraphics[width = 0.15\textwidth]{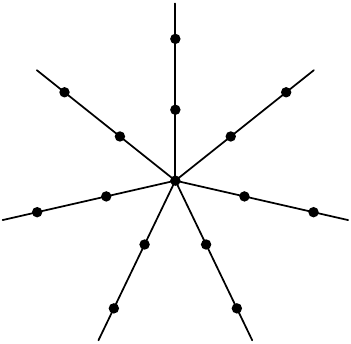}}
\end{figure}

An $\ell$-thread in a graph $G$ is a path of length $\ell+1$ in $G$ whose $\ell$ internal vertices have degree $2$ in the full graph $G$. Some of the configurations in the next theorem are illustrated in the Figs.~\subref{fig:subfig:a}--\subref{fig:subfig:l}. Note that all the edges incident with black dots are represented in the figures.  
\begin{theorem}\label{MADTHM}
Suppose that $G$ is a graph with $\delta(G) = 2$, average degree less than $2 + 2 \rho$ and without $(2, 2, \infty)$-triangles. 
\begin{enumerate}[label = (\arabic*)]
\item\label{aa} If $\rho \leq \frac{1}{5}$, then there exists a $(2, 2, 2)$-path or a configuration in Fig.~\subref{fig:subfig:a}, \subref{fig:subfig:b}. 
\item\label{bb} If $\rho \leq \frac{1}{4}$, then there exists a $(2, 2, 2)$-path, a $(2, 2, 3, 2)$-path, a $(3; 2, 2, 2)$-star, or a configuration in Fig.~\subref{fig:subfig:c}. 
\item\label{cc} If $\rho \leq \frac{2}{7}$, then there exists a $(2, 2, 2)$-path, a $(2, 2, 3)$-path, a $(3; 2, 2, 5^{-})$-star, a configuration in Fig.~\subref{fig:subfig:d}, \subref{fig:subfig:e}, or \subref{fig:subfig:h}. 
\item\label{dd} If $\rho \leq \frac{5}{16}$, then there exists a $(2, 2, 2)$-path, a $(2, 2, 3)$-path, a $(2, 3, 2)$-path, a configuration in Fig.~\subref{fig:subfig:f}, \subref{fig:subfig:g}, \subref{fig:subfig:i}, \subref{fig:subfig:j}, or \subref{fig:subfig:k}. 
\item\label{ee} If $\rho \leq \frac{1}{3}$, then there exists a $(2, 2, 2)$-path, a $(2, 2, 3)$-path, a $(2, 3, 2)$-path, a configuration in Fig.~\subref{fig:subfig:f}, \subref{fig:subfig:g}, \subref{fig:subfig:i}, \subref{fig:subfig:j}, \subref{fig:subfig:k} or \subref{fig:subfig:l}. 
\end{enumerate}
\end{theorem}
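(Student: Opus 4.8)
The plan is to argue by contradiction using discharging, treating all five parts through a single scheme parametrized by $\rho$. Suppose $G$ is a counterexample for the relevant $\rho$: thus $\delta(G) = 2$, the average degree is less than $2 + 2\rho$, there is no $(2,2,\infty)$-triangle, and $G$ avoids every configuration listed in the corresponding part. Assign to each vertex $v$ the initial charge $\mu(v) = \deg(v) - (2 + 2\rho)$. Then
\[
\sum_{v \in V(G)} \mu(v) = 2|E(G)| - (2 + 2\rho)|V(G)| < 0,
\]
because the average degree is below $2 + 2\rho$. I will redistribute charge without changing the total and then show every vertex ends nonnegative, contradicting this strict inequality.

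First I would extract the structural information hidden in the forbidden list. The absence of a $(2,2,2)$-path means no $2$-vertex has two $2$-neighbors, so each maximal run of consecutive $2$-vertices (an $\ell$-thread) has $\ell \leq 2$; the absence of $(2,2,\infty)$-triangles forces the two $3^{+}$-endpoints of any $2$-thread to be distinct. Consequently every $2$-vertex is either \emph{isolated} (both neighbors of degree $\geq 3$) or an \emph{endpoint} of a genuine $2$-thread (exactly one $2$-neighbor and one $3^{+}$-neighbor). Since each $2$-vertex carries charge $-2\rho$, the natural rules are that a $3^{+}$-vertex sends $\rho$ across each edge to an isolated $2$-neighbor and $2\rho$ across each edge to a $2$-thread endpoint. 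Under these rules every $2$-vertex collects exactly $2\rho$ and finishes at charge $0$, while no edge ever carries more than $2\rho$ out of a vertex.

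The high-degree vertices are then handled uniformly by \autoref{L}. A vertex of degree $\kappa$ that loses at most $2\rho$ on each incident edge ends with charge at least $\kappa - (2 + 2\rho) - 2\kappa\rho$, which is exactly the quantity the lemma certifies to be nonnegative once $\kappa$ reaches the threshold $4,5,6,7,8$ in parts~\ref{aa}--\ref{ee} respectively. Hence all vertices of degree at least that threshold are automatically safe, and the whole difficulty collapses onto the \emph{poor} vertices of the few remaining small degrees (degree $3$ in part~\ref{aa}, degrees $3$ and $4$ in part~\ref{bb}, and so on, one more small degree becoming relevant with each larger value of $\rho$).

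The main obstacle—and the reason the statement carries growing lists of paths, stars, and figures—is the analysis of these poor vertices. For each poor degree I would enumerate the local pictures according to how many isolated $2$-neighbors and how many $2$-thread endpoints surround the vertex (and, where the charge is tight, according to the degrees of the vertices just past the threads). A routine computation shows that a poor vertex violates nonnegativity precisely when it is adjacent to too many needy $2$-vertices, and each such offending pattern is one of the excluded configurations: the $(2,2,2)$-, $(2,3,2)$-, $(2,2,3)$-paths, the $(3;2,2,2)$- and $(3;2,2,5^{-})$-stars, and the pictures of Figs.~\subref{fig:subfig:a}--\subref{fig:subfig:l}. Since $G$ avoids all of these, every poor vertex also finishes nonnegative, so $\sum_v \mu(v) \geq 0$, the desired contradiction. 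The genuinely delicate point will be to keep this enumeration tight across all five values of $\rho$ simultaneously: a slightly wasteful discharging rule, or a failure to exploit the $(2,2,\infty)$-triangle ban when a thread loops back on itself, would swell the forbidden list beyond the one stated, so matching the configurations exactly is where the real work lies.
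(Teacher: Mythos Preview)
Your overall framework—discharging with initial charge $\deg(v) - (2+2\rho)$, using \autoref{L} to dispose of large-degree vertices, then case-checking the small ``poor'' degrees—matches the paper exactly. But there is a genuine gap in your discharging rules: transferring charge only from $3^{+}$-vertices to neighboring $2$-vertices is not enough. The paper adds, in parts \ref{aa}, \ref{cc}, \ref{dd}, \ref{ee}, a \emph{secondary} rule whereby sufficiently large vertices (the $4^{+}$-, $6^{+}$-, $7^{+}$-, or $8^{+}$-vertices, respectively) send $2\rho$ to each adjacent $3^{+}$-vertex as well.

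To see why this is necessary, take part~\ref{cc} with $\rho = \tfrac{2}{7}$. A $3$-vertex $w$ adjacent to two isolated $2$-vertices and one $6^{+}$-vertex is not excluded by any listed configuration (the forbidden $(3;2,2,5^{-})$-star only tells you the third neighbor has degree at least $6$). Under your rules $w$ sends $2\rho$ and receives nothing, ending at $3 - (2+2\rho) - 2\rho = 1 - 4\rho = -\tfrac{1}{7} < 0$. The same failure occurs in part~\ref{aa} for a $3$-vertex with a $4^{+}$-neighbor and two attached $2$-threads ($1 - 6\rho < 0$ at $\rho = \tfrac{1}{5}$), and in parts~\ref{dd}, \ref{ee} for $4$- and $5$-vertices with a single high-degree neighbor and several $2$-threads. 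The point is that several of the forbidden configurations constrain the degree of a \emph{non-$2$} neighbor of a poor vertex, and that information is useless unless the scheme lets that high-degree neighbor donate charge. Your remark about looking at ``the degrees of the vertices just past the threads'' suggests you are anticipating the wrong refinement; the refinement that matters is the degree of the adjacent $3^{+}$-vertex itself.
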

\begin{proof}
Suppose that all the configurations do not exist. We set the initial charge of every $\kappa$-vertex with $\kappa - (2 + 2 \rho)$. Thus the sum of all the initial charge of vertices is less than zero. We design appropriate discharging rules to redistribute charge among the vertices, such that the final charge of each vertex is at least zero, and then the sum of the final charge is at least zero, which is a contradiction. 

\begin{enumerate}[label = (R\arabic*)]
\item Each $2$-vertex receives $\rho$ from each endpoint of the maximal thread containing it. 
\item When $\rho \leq \frac{1}{5}$, each $4^{+}$-vertex sends $2\rho$ to each adjacent $3^{+}$-vertex. 
\item When $\rho \leq \frac{2}{7}$, each $6^{+}$-vertex sends $2\rho$ to each adjacent $3^{+}$-vertex. 
\item When $\rho \leq \frac{5}{16}$, each $7^{+}$-vertex sends $2\rho$ to each adjacent $3^{+}$-vertex. 
\item When $\rho \leq \frac{1}{3}$, each $7^{+}$-vertex sends $2\rho$ to each adjacent $3^{+}$-vertex. 
\end{enumerate}

Every $2$-vertex has final charge zero due to (R1). By the absence of $(2, 2, 2)$-paths, every $\kappa$-vertex sends at most $2\kappa$ times $\rho$, thus the final charge is at least $\kappa - (2 + 2 \rho) - 2\kappa\rho$. Note that no vertex is contained in a $(2, 2, \infty)$-triangle. 

\paragraph{Proof of the item \ref{aa}.} By \autoref{L}~\ref{a}, the final charge of every $4^{+}$-vertex is nonnegative. Now, we consider the $3$-vertices. If a $3$-vertex is adjacent to a $4^{+}$-vertex, then its final charge is at least $3 - (2 + 2 \rho) + 2\rho - 4\rho > 0$. Suppose that every $3$-vertex is adjacent to three $3^{-}$-vertices. By the absence of the configurations in Fig.~\subref{fig:subfig:a} and \subref{fig:subfig:b}, a $3$-vertex sends at most three times $\rho$, thus its final charge is at least $3 - (2 + 2 \rho) - 3\rho \geq 0$. 

\paragraph{Proof of the item \ref{bb}.} By \autoref{L}~\ref{b}, the final charge of every $5^{+}$-vertex is nonnegative. Now, we consider the $4^{-}$-vertices. By the absence of $(2, 2, 3, 2)$-paths and $(3; 2, 2, 2)$-stars, a $3$-vertex $w$ sends at most two times $\rho$, thus its final charge is at least $3 - (2 + 2 \rho) - 2\rho \geq 0$. By the absence of the configuration in Fig.~\subref{fig:subfig:c}, a $4$-vertex $w$ sends at most six times $\rho$, thus its final charge is at least $4 - (2 + 2 \rho) - 6\rho \geq 0$. 

\paragraph{Proof of the item \ref{cc}.} By \autoref{L}~\ref{c}, the final charge of every $6^{+}$-vertex is nonnegative. Now, we consider the $5^{-}$-vertices. Note that $(2, 2, 2)$-paths and $(2, 2, 3)$-paths are absent in $G$. If a $3$-vertex $w$ is adjacent to at most one $2$-vertex, then its final charge is at least $3 - (2 + 2 \rho) - \rho > 0$. Suppose that a $3$-vertex $w$ is adjacent to two $2$-vertices. By the absence of $(3; 2, 2, 5^{-})$-stars, $w$ is adjacent to a $6^{+}$-vertex, thus its final charge is at least $3 - (2 + 2 \rho) + 2\rho - 2\rho = 1- 2\rho > 0$. 

By the absence of the configurations in Fig.~\subref{fig:subfig:d} and Fig.~\subref{fig:subfig:e}, a $4$-vertex $w$ sends at most five times $\rho$, thus its final charge is at least $4 - (2 + 2 \rho) - 5\rho \geq 0$. By the absence of the configuration in Fig.~\subref{fig:subfig:h}, a $5$-vertex sends at most eight times $\rho$, thus its final charge is at least $5 - (2 + 2 \rho) - 8\rho > 0$. 

\paragraph{Proof of the item \ref{dd}.} By \autoref{L}~\ref{d}, the final charge of every $7^{+}$-vertex is nonnegative. Now, we consider the $6^{-}$-vertices. By the absence of $(2, 2, 3)$-paths and $(2, 3, 2)$-paths, a $3$-vertex $w$ sends at most one times $\rho$, thus its final charge is at least $3 - (2 + 2 \rho) - \rho > 0$. 

If a $4$-vertex is adjacent to a $7^{+}$-vertex, then its final charge is at least $4 - (2 + 2\rho) + 2\rho - 6\rho > 0$. Suppose that a $4$-vertex $w$ is not adjacent to any $7^{+}$-vertex. By the absence of the configurations in Fig.~\subref{fig:subfig:f} and Fig.~\subref{fig:subfig:g}, $w$ sends at most four times $\rho$, thus its final charge is at least $4 - (2 + 2 \rho) - 4\rho > 0$. 

If a $5$-vertex is adjacent to a $7^{+}$-vertex, then its final charge is at least $5 - (2 + 2\rho) + 2\rho - 8\rho > 0$. Suppose that a $5$-vertex $w$ is not adjacent to any $7^{+}$-vertex. By the absence of the configurations in Fig.~\subref{fig:subfig:i} and Fig.~\subref{fig:subfig:j}, $w$ sends at most seven times $\rho$, thus its final charge is at least $5 - (2 + 2 \rho) - 7\rho > 0$. By the absence of the configuration in Fig.~\subref{fig:subfig:k}, a $6$-vertex sends at most ten times $\rho$, thus its final charge is at least $6 - (2 + 2 \rho) - 10\rho > 0$. 

\paragraph{Proof of the item \ref{ee}.} By \autoref{L}~\ref{e}, the final charge of every $8^{+}$-vertex is nonnegative. Now, we consider the $7^{-}$-vertices. By the absence of $(2, 2, 3)$-paths and $(2, 3, 2)$-paths, a $3$-vertex $w$ sends at most one times $\rho$, thus its final charge is at least $3 - (2 + 2 \rho) - \rho \geq 0$. 

If a $4$-vertex is adjacent to a $8^{+}$-vertex, then its final charge is at least $4 - (2 + 2\rho) + 2\rho - 6\rho \geq 0$. Suppose that a $4$-vertex $w$ is not adjacent to any $8^{+}$-vertex. By the absence of the configurations in Fig.~\subref{fig:subfig:f} and Fig.~\subref{fig:subfig:g}, $w$ sends at most four times $\rho$, thus its final charge is at least $4 - (2 + 2 \rho) - 4\rho \geq 0$. 

If a $5$-vertex is adjacent to a $8^{+}$-vertex, then its final charge is at least $5 - (2 + 2\rho) + 2\rho - 8\rho > 0$. Suppose that a $5$-vertex $w$ is not adjacent to any $8^{+}$-vertex. By the absence of the configurations in Fig.~\subref{fig:subfig:i} and Fig.~\subref{fig:subfig:j}, $w$ sends at most seven times $\rho$, thus its final charge is at least $5 - (2 + 2 \rho) - 7\rho \geq 0$. By the absence of the configuration in Fig.~\subref{fig:subfig:k}, a $6$-vertex sends at most ten times $\rho$, thus its final charge is at least $6 - (2 + 2 \rho) - 10\rho \geq 0$. 

By the absence of the configuration in Fig.~\subref{fig:subfig:l}, a $7$-vertex sends at most thirteen times $\rho$, thus its final charge is at least $7 - (2 + 2 \rho) - 13\rho \geq 0$. 
\end{proof}

Note that $(\girth(G) - 2)\mad(G) < 2\girth(G)$ for every planar graph $G$, so we immediately have the following results. 

\begin{corollary}\label{SparsePlane}
Suppose that $G$ is a planar graph with $\delta(G) = 2$ and girth at least $g$. 
\begin{enumerate}[label = (\arabic*)]
\item If $g \geq 12$, then there exists a $(2, 2, 2)$-path, or a configuration in Fig.~\subref{fig:subfig:a}, \subref{fig:subfig:b}. 
\item If $g \geq 10$, then there exists a $(2, 2, 2)$-path, a $(2, 2, 3, 2)$-path, a $(3; 2, 2, 2)$-star, or a configuration in Fig.~\subref{fig:subfig:c}. 
\item If $g \geq 9$, then there exists a $(2, 2, 2)$-path, a $(2, 2, 3)$-path, a $(3; 2, 2, 5^{-})$-star, a configuration in Fig.~\subref{fig:subfig:d}, \subref{fig:subfig:e}, or \subref{fig:subfig:h}.  
\item If $g \geq 8$, then there exists a $(2, 2, 2)$-path, a $(2, 2, 3)$-path, a $(2, 3, 2)$-path, a configuration in Fig.~\subref{fig:subfig:f}, \subref{fig:subfig:g}, \subref{fig:subfig:i}, \subref{fig:subfig:j}, \subref{fig:subfig:k} or \subref{fig:subfig:l}. \qed
\end{enumerate}
\end{corollary}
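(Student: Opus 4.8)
The plan is to obtain each case of \autoref{SparsePlane} as a direct bookkeeping consequence of \autoref{MADTHM}, using the sparseness inequality $(\girth(G) - 2)\mad(G) < 2\girth(G)$ to convert the girth hypothesis into the average-degree hypothesis required by the theorem. For a planar graph of girth at least $g$ this inequality rearranges to $\mad(G) < 2 + \frac{4}{g-2}$, and since the average degree of $G$ is at most $\mad(G)$, the graph $G$ has average degree below $2 + 2\rho$ for the choice $\rho = \frac{2}{g-2}$. The function $g \mapsto 2 + \frac{4}{g-2}$ is decreasing, so raising the girth only lowers the threshold; hence it suffices to fix $\rho$ at the value attached to the smallest admissible girth in each case and check that the boundary value of $g$ produces exactly that threshold.

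Next I would carry out the four arithmetic matchings, fixing $\rho$ at the boundary and invoking monotonicity. For $g = 12$ one gets $2 + \frac{4}{g-2} = \frac{12}{5} = 2 + 2\cdot\frac15$, so girth at least $12$ forces average degree below the threshold of \autoref{MADTHM}~\ref{aa} with $\rho = \frac15$. Likewise $g = 10$ gives $\frac52 = 2 + 2\cdot\frac14$, matching \ref{bb}; $g = 9$ gives $\frac{18}{7} = 2 + 2\cdot\frac27$, matching \ref{cc}; and $g = 8$ gives $\frac83 = 2 + 2\cdot\frac13$, matching \ref{ee}. The one point deserving attention is that $g = 8$ must be paired with \ref{ee} and \emph{not} with \ref{dd}: the threshold of \ref{dd} is $2 + 2\cdot\frac{5}{16} = \frac{21}{8} = 2.625 < \frac83$, so the bound $\mad(G) < \frac83$ available at girth $8$ is too weak to invoke \ref{dd}. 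Indeed the value $\rho = \frac{5}{16}$ corresponds to the non-integer girth $g = 2 + \frac{2}{\rho} = 8.4$, which is why \ref{dd} has no girth analogue and case (4) of the corollary reproduces the configuration list of \ref{ee}, including Fig.~\subref{fig:subfig:l}.

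It then remains to discharge the two side conditions of \autoref{MADTHM}. The hypothesis $\delta(G) = 2$ is carried over verbatim, and the prohibition of $(2,2,\infty)$-triangles is vacuous in every case: all the girth values considered are at least $8 > 3$, so $G$ is triangle-free and contains no triangle whatsoever. With the average-degree bound and both side conditions in place, I would apply the matching item of \autoref{MADTHM} in each case, and the resulting list of unavoidable configurations is precisely the one stated in the corresponding item of \autoref{SparsePlane}.

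There is essentially no hard step here; the work is entirely in the translation. The only genuine care required is the case-by-case verification that $2 + \frac{4}{g-2}$ lands at the correct threshold $2 + 2\rho$, the observation that $g = 8$ falls under \ref{ee} rather than \ref{dd}, and a check that the Euler-formula inequality is \emph{strict}, so that the strict average-degree hypothesis of \autoref{MADTHM} is met even at the boundary values $g \in \{8, 9, 10, 12\}$. A misalignment at any of these points would yield the wrong configuration list, so this is the place where I would slow down and double-check the arithmetic.
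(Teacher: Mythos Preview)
Your proposal is correct and follows exactly the route the paper takes: the paper simply records the inequality $(\girth(G)-2)\mad(G) < 2\girth(G)$ and declares the corollary immediate, while you have spelled out the arithmetic matching of each girth to the corresponding $\rho$ in \autoref{MADTHM} (including the correct pairing of $g=8$ with item~\ref{ee}) and verified the side conditions. There is nothing to add.
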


Now, we consider the sharpness of \autoref{MADTHM}. For each configuration \subref{fig:subfig:a}--\subref{fig:subfig:l}, we construct a class of graphs containing that configuration but no the others. We may assume that the number of vertices in $R$ is $n$ and the number of edges in $R$ is $m$ in the following. Note that the configuration \subref{fig:subfig:a} contains a $(2, 2, 3, 2)$-path and a $(3; 2, 2, 2)$-star; the configuration \subref{fig:subfig:b} contains a $(2, 2, 3, 2)$-path and a $(3; 2, 2, 5^{-})$-star. 

{\bf $(2, 2, 2)$-path}: Let $G_{1}$ be the graph obtained from a $4$-regular graph by inserting three vertices on each edge. Note that $\mad(G_{1}) = \frac{16}{7} < 2 + 2 \times \frac{1}{5}$ and $G_{1}$ contains a $(2, 2, 2)$-path but no configuration \subref{fig:subfig:a} or \subref{fig:subfig:b}. Let $G_{2}$ be the graph obtained from an $8$-regular graph by inserting three vertices on each edge. Note that $\mad(G_{2}) = \frac{32}{13} < 2 + 2 \times \frac{1}{4}$ and $G_{2}$ contains a $(2, 2, 2)$-path. 

{\bf $(2, 2, 3, 2)$-path}: Let $R$ be a $3$-regular graph with a $3$-edge-coloring (\eg the complete graph on four vertices). Let $G$ be the graph obtained from $R$ by inserting two vertices on each edge in the class $M_{1}$ and one vertex on each edge in the class $M_{2}$. Note that $|V(G)| = n + m$ and $|E(G)| = 2m$. Hence, the graph $G$ has average degree $2 + \frac{2}{5}$ and it contains a $(2, 2, 3, 2)$-path. This graph also shows that the upper bound on the average degree in \autoref{MADTHM}~\ref{aa} is best possible.

{\bf $(3; 2, 2, 2)$-star}: Let $G$ be the graph obtained from a $3$-regular graph $R$ by inserting one vertex on each edge. Note that $|V(G)| = n + m$ and $|E(G)| = 2m$. Hence, the graph $G$ has average degree $2 + \frac{2}{5}$ and it contains a $(3; 2, 2, 2)$-star.

{\bf $(2, 2, 3)$-path}: Let $R$ be a $3$-regular graph with a perfect matching $M$ (\eg the complete graph on four vertices). Let $G$ be the graph obtained from $R$ by inserting two vertices on each edge in $M$. Note that $|V(G)| = n + \frac{2}{3}m$ and $|E(G)| = \frac{5}{3}m$. Hence, the graph $G$ has average degree $2 + \frac{1}{2}$ and it contains a $(2, 2, 3)$-path. This graph also shows that the upper bound on the average degree in \autoref{MADTHM}~\ref{bb} is best possible. 

{\bf $(3; 2, 2, 5^{-})$-star/$(2, 3, 2)$-path}: Let $R$ be a $3$-regular graph with a perfect matching $M$ (\eg the complete graph on four vertices). Let $G$ be the graph obtained from $R$ by inserting one vertex on each edge not in $M$. Note that $|V(G)| = n + \frac{2}{3}m$ and $|E(G)| = \frac{5}{3}m$. Hence, the graph $G$ has average degree $2 + \frac{1}{2}$ and it contains a $(3; 2, 2, 5^{-})$-star/$(2, 3, 2)$-path. 

{\bf Configuration \subref{fig:subfig:a}}: Let $R$ be a $3$-regular graph with a perfect matching $M$ (\eg the complete graph on four vertices). Let $G$ be the graph obtained from $R$ by inserting two vertices on each edge in $M$ and one vertex on each edge not in $M$. Note that $|V(G)| = n + \frac{4}{3}m$ and $|E(G)| = \frac{7}{3}m$. Hence, the graph $G$ has average degree $2 + \frac{1}{3}$ and it contains a configuration \subref{fig:subfig:a}.

{\bf Configuration \subref{fig:subfig:b}}: Let $R$ be a $3$-regular graph with a perfect matching $M$ (\eg the complete graph on four vertices). Let $G$ be the graph obtained from $R$ by inserting two vertices on each edge not in $M$. Note that $|V(G)| = n + \frac{4}{3}m$ and $|E(G)| = \frac{7}{3}m$. Hence, the graph $G$ has average degree $2 + \frac{1}{3}$ and it contains a configuration \subref{fig:subfig:b}.

{\bf Configuration \subref{fig:subfig:c}}: Let $R$ be a $4$-regular graph with a perfect matching $M$ (\eg $C_{2k} \times C_{\ell}$). Let $G$ be the graph obtained from $R$ by inserting one vertex on each edge in $M$ and two vertices on each edge not in $M$. Note that $|V(G)| = n + \frac{7}{4}m$ and $|E(G)| = \frac{11}{4}m$. Hence, the graph $G$ has average degree $2 + \frac{4}{9}$ and it contains a configuration \subref{fig:subfig:c}. 

{\bf Configuration \subref{fig:subfig:d}}: Let $R$ be a $4$-regular graph with a $2$-factor $F$ (\eg $C_{2k} \times C_{\ell}$). Let $G$ be the graph obtained from $R$ by inserting two vertices on each edge in $F$ and one vertex on each edge not in $F$. Note that $|V(G)| = n + \frac{3}{2}m$ and $|E(G)| = \frac{5}{2}m$. Hence, the graph $G$ has average degree $2 + \frac{1}{2}$ and it contains a configuration \subref{fig:subfig:d}. 

{\bf Configuration \subref{fig:subfig:e}}: Let $R$ be a $4$-regular graph with a perfect matching $M$ (\eg $C_{2k} \times C_{\ell}$). Let $G$ be the graph obtained from $R$ by inserting two vertices on each edge not in $M$. Note that $|V(G)| = n + \frac{3}{2}m$ and $|E(G)| = \frac{5}{2}m$. Hence, the graph $G$ has average degree $2 + \frac{1}{2}$ and it contains a configuration \subref{fig:subfig:e}. 

{\bf Configuration \subref{fig:subfig:f}}: Let $R$ be a $4$-regular graph with a perfect matching $M$ (\eg $C_{2k} \times C_{\ell}$). Let $G$ be the graph obtained from $R$ by inserting two vertices on each edge in $M$ and one vertex on each edge not in $M$. Note that $|V(G)| = n + \frac{5}{4}m$ and $|E(G)| = \frac{9}{4}m$. Hence, the graph $G$ has average degree $2 + \frac{4}{7}$ and it contains a configuration \subref{fig:subfig:f}. This graph also shows that the upper bound on the average degree in \autoref{MADTHM}~\ref{cc} is best possible. 

{\bf Configuration \subref{fig:subfig:g}}: Let $R$ be a $4$-regular graph with a $2$-factor $F$ in which each component is an even cycle (\eg $C_{2k} \times C_{\ell}$). Let $G$ be the graph obtained from $R$ by inserting two vertices on each edge not in $F$ and one vertex on each edge of a perfect matching of $F$. Note that $|V(G)| = n + \frac{5}{4}m$ and $|E(G)| = \frac{9}{4}m$. Hence, the graph $G$ has average degree $2 + \frac{4}{7}$ and it contains a configuration \subref{fig:subfig:g}. 

{\bf Configuration \subref{fig:subfig:h}}: Let $R$ be a $5$-regular graph with a perfect matching $M$ (\eg the complete graph on six vertices). Let $G$ be the graph obtained from $R$ by inserting one vertex on each edge in $M$ and two vertices on each edge not in $M$. Note that $|V(G)| = n + \frac{9}{5}m$ and $|E(G)| = \frac{14}{5}m$. Hence, the graph $G$ has average degree $2 + \frac{6}{11}$ and it contains a configuration \subref{fig:subfig:h}. 

{\bf Configuration \subref{fig:subfig:i}}: Let $R$ be a $5$-regular graph with a perfect matching $M$ (\eg the complete graph on six vertices). Let $G$ be the graph obtained from $R$ by inserting two vertices on each edge not in $M$. Note that $|V(G)| = n + \frac{8}{5}m$ and $|E(G)| = \frac{13}{5}m$. Hence, the graph $G$ has average degree $2 + \frac{3}{5}$ and it contains a configuration \subref{fig:subfig:i}. 

{\bf Configuration \subref{fig:subfig:j}}: Let $R$ be a $5$-regular graph with a $2$-factor $F$ (\eg the complete graph on six vertices). Let $G$ be the graph obtained from $R$ by inserting one vertex on each edge in $F$ and two vertices on each edge not in $F$. Note that $|V(G)| = n + \frac{8}{5}m$ and $|E(G)| = \frac{13}{5}m$. Hence, the graph $G$ has average degree $2 + \frac{3}{5}$ and it contains a configuration \subref{fig:subfig:j}. 

{\bf Configuration \subref{fig:subfig:k}}: Let $R$ be a $6$-regular graph with a perfect matching $M$. Let $G$ be the graph obtained from $R$ by inserting one vertex on each edge in $M$ and two vertices on each edge not in $M$. Note that $|V(G)| = n + \frac{11}{6}m$ and $|E(G)| = \frac{17}{6}m$. Hence, the graph $G$ has average degree $2 + \frac{8}{13}$ and it contains a configuration \subref{fig:subfig:k}. 

{\bf Configuration \subref{fig:subfig:l}}: Let $R$ be a $7$-regular graph (\eg the complete graph on eight vertices). Let $G$ be the graph obtained from $R$ by inserting two vertices on each edge. Note that $|V(G)| = n + 2m$ and $|E(G)| = 3m$. Hence, the graph $G$ has average degree $2 + \frac{5}{8}$ and it contains a configuration \subref{fig:subfig:l}. This graph also shows that the upper bound on the average degree in \autoref{MADTHM}~\ref{dd} is best possible. 
\begin{remark}
By the above discussions, no configuration can be omitted, that is, no one can be deleted from the set of unavoidable configurations. We also see that the upper bounds on the average degree in the first four items of \autoref{MADTHM} cannot be improved. Actually, the upper bound on the average degree in \autoref{MADTHM}~\ref{ee} is also best possible. For example, let $G$ be a graph obtained from an $8$-regular graph by inserting two vertices on each edge; note that the average degree of $G$ is $2 + \frac{2}{3}$. 
\end{remark}

\begin{theorem}\label{MAD14/5}
If $G$ is a graph with $\delta(G) = 2$, average degree less than $\frac{14}{5}$ and without $(2, 2, \infty)$-triangles, then $G$ has one of the following configurations: a $(2, 2, 13^{-}, 2)$-path, a $(2, 3^{-}, 3^{-})$-path and a $(4; 2, 2, 2, 3^{-})$-star. 
\end{theorem}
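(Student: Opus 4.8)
The plan is to run a discharging argument by contradiction, parallel to the proof of \autoref{MADTHM} with the parameter $\rho = \frac{2}{5}$, so that $2 + 2\rho = \frac{14}{5}$. Assume $G$ contains none of a $(2,2,13^{-},2)$-path, a $(2,3^{-},3^{-})$-path, or a $(4;2,2,2,3^{-})$-star. Give each vertex $v$ the initial charge $\deg(v) - \frac{14}{5}$; since the average degree is less than $\frac{14}{5}$, the total charge is negative. I will redistribute so that every vertex ends with nonnegative charge, which is the desired contradiction.

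First I would extract the local structure forced by the three forbidden configurations. From the absence of a $(2,3^{-},3^{-})$-path: no three consecutive $2$-vertices occur (so every maximal thread has at most two internal $2$-vertices), no component is a pure cycle, and any thread with two internal $2$-vertices has both endpoints of degree at least $4$ (otherwise a $(2,2,3^{-})$ subpath appears). Moreover every $3$-vertex has at most one $2$-neighbour (two would give a $(2,3,2)$-path), and a $3$-vertex with a $2$-neighbour $x$ has its other two neighbours of degree at least $4$, with $x$ on a length-one thread whose far endpoint is a $4^{+}$-vertex. From the absence of a $(2,2,13^{-},2)$-path: a vertex of degree between $4$ and $13$ that is an endpoint of a thread with two internal $2$-vertices has no further $2$-neighbour. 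From the absence of a $(4;2,2,2,3^{-})$-star: a $4$-vertex with three $2$-neighbours has its fourth neighbour of degree at least $4$, and no $4$-vertex has four $2$-neighbours.

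Next I would apply two discharging rules: (R1) each $2$-vertex receives $\frac{2}{5}$ from each of the two endpoints of its maximal thread; (R2) each $4^{+}$-vertex sends $\frac{1}{10}$ to each adjacent $3$-vertex that has a $2$-neighbour. Then every $2$-vertex finishes with charge $0$. A $3$-vertex with no $2$-neighbour keeps its charge $\frac{1}{5}$; a $3$-vertex with a $2$-neighbour sends $\frac{2}{5}$ by (R1) but, having exactly two neighbours of degree at least $4$, receives $2\cdot\frac{1}{10}=\frac{1}{5}$ by (R2), finishing at $\frac{1}{5}-\frac{2}{5}+\frac{1}{5}=0$. For a vertex $w$ of degree $d$ I would bound the charge it sends by $\frac{2}{5}(t_{1}+2t_{2})+\frac{1}{10}(d-t_{1}-t_{2})$, where $t_{1}$ (resp.\ $t_{2}$) counts the incident threads with one (resp.\ two) internal $2$-vertices, and then check that $\deg(w)-\frac{14}{5}$ dominates this in each regime: for $d\geq 14$ the bound $\frac{4d}{5}$ already suffices outright; for $5\leq d\leq 13$ the $(2,2,13^{-},2)$-restriction forces $t_{2}\leq 1$ (with $t_{1}=0$ when $t_{2}=1$), which is enough; and for $d=4$ a case analysis on $(t_{1},t_{2})$, using the star-restriction to guarantee that a $4$-vertex with three $2$-neighbours gives nothing by (R2), leaves every $4$-vertex with charge at least $0$ (tight when $t_{1}=3$).

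The main obstacle will be the $4$-vertex analysis, where the charge $\frac{6}{5}$ is only barely enough: I must simultaneously control how many $2$-vertices pull $\frac{2}{5}$ through (R1) and how many needy $3$-neighbours pull $\frac{1}{10}$ through (R2), and it is exactly here that all three forbidden configurations are needed in concert --- the star-condition to cap the $2$-neighbours of a $4$-vertex, the $(2,2,13^{-},2)$-condition to forbid a second long thread, and the $(2,3^{-},3^{-})$-condition to pin down that each needy $3$-vertex has precisely two $4^{+}$-neighbours, so that $\frac{1}{10}$ from each is both necessary and affordable. Verifying that the single constant $\frac{1}{10}$ in (R2) is compatible with every $4$-vertex subcase (rather than having to be tuned per subcase) is the delicate bookkeeping that makes or breaks the argument.
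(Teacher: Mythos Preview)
Your argument is correct and essentially identical to the paper's: same initial charge $\deg(v)-\frac{14}{5}$, same thread rule (R1), and your (R2) is just a harmless sharpening of the paper's rule, which sends $\frac{1}{10}$ from every $4^{+}$-vertex to \emph{every} adjacent $3$-vertex rather than only to the needy ones; the case split for $4^{-}$-, $5$--$13$-, and $14^{+}$-vertices then proceeds exactly as you outline. One small slip that does not affect the proof: your claim that the far endpoint of the length-one thread through $x$ must be a $4^{+}$-vertex is not forced (it could be another $3$-vertex whose remaining two neighbours are $4^{+}$), but you never use this claim, so the discharging goes through unchanged.
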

\begin{proof}
Suppose that $G$ is a counterexample to the theorem. Without loss of generality, we may assume that $G$ is connected. We use the discharging method to get a contradiction. Initially, we set the charge of a $\kappa$-vertex with $\kappa - \frac{14}{5}$. We design appropriate discharging rules to redistribute charge among the vertices, such that the final charge of each vertex is at least zero, and then the sum of the final charge is at least zero, which is a contradiction.
\begin{enumerate}[label = (R\arabic*)]
\item Each $2$-vertex receives $\frac{2}{5}$ from each endpoint of the maximal thread containing it. 
\item Each $4^{+}$-vertex sends $\frac{1}{10}$ to each adjacent $3$-vertex.
\end{enumerate}

Each $2$-vertex has final charge $2 - \frac{14}{5} + 2 \times \frac{2}{5} = 0$. By the absence of $(2, 3^{-}, 3^{-})$-paths, each $3$-vertex is adjacent to at most one $2$-vertex, thus the final charge is at least $3 - \frac{14}{5} + 2 \times \frac{1}{10} - \frac{2}{5} = 0$.  

Let $w$ be an arbitrary vertex with $4 \leq \deg(w) \leq 13$. Suppose that it is an endpoint of a $(2, 2, \infty)$-path. By the absence of $(2, 2, 13^{-}, 2)$-paths, $w$ is adjacent to exactly one $2$-vertex, thus its final charge is at least $\deg(w) - \frac{14}{5} - 2 \times \frac{2}{5} - (\deg(w) - 1) \times \frac{1}{10} > 0$. So we may assume that $w$ is not an endpoint of a $(2, 2, \infty)$-path. If $w$ is a $5^{+}$-vertex, then its final charge is at least $\deg(w) - \frac{14}{5} - \deg(w) \times \frac{2}{5} > 0$. If $w$ is a $4$-vertex and it is adjacent to at most two $2$-vertices, then its final charge is at least $4 - \frac{14}{5} - 2 \times \frac{2}{5} - 2 \times \frac{1}{10} > 0$. If $w$ is a $4$-vertex and it is adjacent to at least three $2$-vertices, then it is adjacent to a $4^{+}$-vertex, and then the final charge is at least $4 - \frac{14}{5} - 3 \times \frac{2}{5} = 0$. 

If $w$ is an arbitrary $14^{+}$-vertex, then its final charge is at least $\deg(w) - \frac{14}{5} - 2\deg(w) \times \frac{2}{5} \geq 0$.
\end{proof}

\begin{theorem}\label{Girth7}
If $G$ is a plane graph with $\delta(G) \geq 2$ and face size at least $7$, then $G$ has a $(2, 2, 5^{-})$-path, or a $(2, 5^{-}, 2)$-path or a $(3, 3, 2, 3)$-path. 
\end{theorem}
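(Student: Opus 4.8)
The plan is to argue by contradiction using Euler's formula and a combined vertex--face discharging argument that genuinely exploits the face-size bound $\deg(f)\ge 7$; since a planar graph of girth at least $7$ satisfies $\mad(G)<\frac{14}{5}$, this is the planar refinement of the sparse setting of \autoref{MAD14/5}. First I would reduce to a connected plane graph $G$ (a component with no $2$-vertex needs no attention, while an isolated cycle $C_k$ with $k\ge 7$ already contains a $(2,2,2)$-path, hence a $(2,2,5^-)$-path), and assume for contradiction that none of the three configurations occurs. I assign to each vertex $v$ the charge $\mu(v)=\deg(v)-\frac{14}{5}$ and to each face $f$ the charge $\mu(f)=\frac{2}{5}\deg(f)-\frac{14}{5}$; by Euler's formula the total charge is $-\frac{28}{5}<0$. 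Every face of size at least $7$ has $\mu(f)\ge 0$ (with equality exactly for $7$-faces), every $3^{+}$-vertex has $\mu(v)\ge\frac15>0$, and only the $2$-vertices are in deficit, each needing $\frac45$.

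Next I would extract the structural consequences of forbidding the three configurations. Absence of $(2,5^-,2)$-paths (which subsumes $(2,2,2)$-paths) shows that every vertex of degree between $2$ and $5$ has at most one $2$-neighbour, so a maximal thread carries at most two internal $2$-vertices. Absence of $(2,2,5^-)$-paths shows that a thread with two internal $2$-vertices has both endpoints of degree at least $6$. Finally, absence of $(3,3,2,3)$-paths shows that if a $2$-vertex has both neighbours of degree $3$, then each of those $3$-vertices has its two remaining neighbours of degree at least $4$. The point of these facts is that wherever a $2$-vertex is flanked by \emph{poor} vertices, the next ring of vertices is forced to be \emph{rich}; in particular, on any face, a boundary vertex with two $2$-neighbours on that boundary must have degree at least $6$.

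The discharging then proceeds in tiers so that each $2$-vertex collects its deficit $\frac45$ from the charge made available by these forced rich vertices. I would let each endpoint of a two-$2$-vertex thread send charge toward the thread, let $4^{+}$-vertices send charge to adjacent $3$-vertices, let $3$-vertices relay charge to an adjacent $2$-vertex, and let $8^{+}$-faces (which carry strictly positive charge) top up the $2$-vertices on their boundaries; the constants are pinned down so that the surplus of a $6^{+}$-vertex ($\ge\frac{16}{5}$), of a $4$- or $5$-vertex, and of a large face each suffice for the threads and isolated $2$-vertices they must support. Verifying that every vertex and every face finishes with nonnegative charge then contradicts the total being $-\frac{28}{5}$.

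The main obstacle is the budget verification in the tight cases, where the face-size hypothesis does its real work. The two delicate points are a degree-$6$ vertex that is the common endpoint of several two-$2$-vertex threads, and an isolated $2$-vertex both of whose neighbours are $3$-vertices and both of whose incident faces have size exactly $7$ (so the faces contribute nothing). In each case the naive ``split the deficit over the two neighbours'' rule fails, and one must instead combine the $(2,2,5^-)$-freeness (to locate a degree-$6$ endpoint), the $(3,3,2,3)$-freeness (to enrich the flanking $3$-vertices with degree-$\ge 4$ neighbours), and the girth-$7$ bound (to prevent threads from packing too tightly around one vertex, which forces the relevant faces to be strictly larger than $7$) so that enough charge can be routed, possibly through intermediate $3$-vertices, to each deficient $2$-vertex. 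Making all three forbidden configurations cooperate quantitatively in exactly these configurations is the crux of the argument.
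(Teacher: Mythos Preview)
Your plan diverges from the paper's proof in a way that creates a real gap. The paper uses the weights $\deg(w)-4$ on vertices and $\deg(\alpha)-4$ on faces, so every $7$-face starts with charge $3$ and is the \emph{primary source} of charge for the $2$- and $3$-vertices on its boundary (each $2$-vertex receives $1$ from each incident face, each $3$-vertex receives $\tfrac13$, and $6^+$-vertices feed $\tfrac13$ back into each incident face). All the work is then a case analysis on $7$-, $8$- and $9^+$-faces, and the three forbidden paths are exactly what make these face budgets balance. Your scheme instead sets the $7$-face charge to zero and tries to route almost everything through vertices, with only $8^+$-faces helping.

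The specific failure is the configuration you yourself flag: a $6$-vertex $w$ that is an endpoint of six two-$2$-vertex threads. Such a $w$ can be surrounded entirely by $7$-faces: take each face incident with $w$ to be $w,a_1,a_2,x,x',a_2',a_1'$, where $a_1,a_2,a_1',a_2'$ are the internal $2$-vertices of two consecutive threads and $x,x'$ (the far endpoints, both $6^+$) are adjacent. This is consistent with girth $7$ and avoids all three forbidden paths, so your assertion that the face-size bound ``forces the relevant faces to be strictly larger than $7$'' is not correct here. In your weighting $w$ carries only $6-\tfrac{14}{5}=\tfrac{16}{5}$, the twelve adjacent $2$-vertices on the six threads demand $\tfrac{4}{5}$ each from their thread endpoints, and $w$'s share under any symmetric split is at least $\tfrac{24}{5}$; the incident faces contribute nothing, and there are no $3$-vertices to relay through. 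By contrast, in the paper's scheme each such $7$-face has three $6^+$-vertices on its boundary, receives $3\cdot\tfrac13$, and exactly covers its four $2$-vertices, while $w$ ends at charge $0$. So the face-centric discharging is not a cosmetic choice; it is what makes the $7$-face case close, and your vertex-centric variant would need a genuinely new idea (not just tuned constants) to survive this example.
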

\begin{proof}
Suppose that $G$ is a counterexample to the theorem. Without loss of generality, we may assume that $G$ is connected. We still use the discharging method to complete the proof. Initially, we assign the charge $\deg(w) - 4$ to each vertex $w$ and $\deg(\alpha) - 4$ to each face $\alpha$. Thus, the sum of the initial charge of vertices and faces is $-8$. We then redistribute the charge among vertices and faces, so that the final charge of each element is nonnegative, which yields the sum of final charge is nonnegative. 
\begin{enumerate}[label = (R\arabic*)]
\item Each $2$-vertex receives $1$ from each incident face. 
\item Each $3$-vertex receives $\frac{1}{3}$ from each incident face.
\item Each $6^{+}$-vertex sends $\frac{1}{3}$ to each incident face.
\end{enumerate}

Each $2$-vertex has final charge $2 - 4 + 2 \times 1 = 0$. Each $3$-vertex has final charge $3 - 4 + 3 \times \frac{1}{3} = 0$. Each $4$-vertex has final charge as the initial charge zero. Each $5$-vertex is not involved in the discharging rules, thus the final charge is $5 - 4 = 1$. Each $6^{+}$-vertex $w$ has final charge $\deg(w) - 4 - \deg(w) \times \frac{1}{3} \geq 0$. Now, we consider the faces.

{\bf Let $\alpha = w_{0}w_{1}\dots w_{6}$ be a $7$-face.}  Suppose that there are two adjacent $2$-vertices on the boundary, say $w_{1}$ and $w_{2}$. By the absence of $(2, 2, 5^{-})$-paths, $w_{0}$ and $w_{3}$ are $6^{+}$-vertices. If $\alpha$ is incident with at most three $2$-vertices, then the final charge is at least $7 - 4 + 2 \times \frac{1}{3} - 3 \times 1 - 2 \times \frac{1}{3} = 0$. So we may assume that $\alpha$ is incident with four $2$-vertices. Note that exactly two of $w_{4}, w_{5}$ and $w_{6}$ are $2$-vertices. By the absence of $(2, 2, 5^{-})$-paths and $(2, 5^{-}, 2)$-paths, we have that one of $w_{4}, w_{5}$ and $w_{6}$ is a $6^{+}$-vertex. Hence, the final charge of $\alpha$ is at least $7 - 4 + 3 \times \frac{1}{3} - 4 \times 1 = 0$. 

Next, suppose that no two adjacent $2$-vertices are on the boundary of $\alpha$. Thus, $\alpha$ is incident with at most three $2$-vertices. If there are three $2$-vertices on the boundary, then there is a $(2, 6^{+}, 2, 6^{+}, 2)$-walk on the boundary, and then the final charge of $\alpha$ is at least $7 - 4 + 2 \times \frac{1}{3} - 3 \times 1 - 2 \times \frac{1}{3} = 0$. If there are at most one $2$-vertex, then the final charge is at least $7 - 4 - 1 - 6 \times \frac{1}{3} = 0$. And if there are exactly two $2$-vertices and at least one $6^{+}$-vertex, then the final charge is at least $7 - 4 + \frac{1}{3} - 2 \times 1 - 4 \times \frac{1}{3} = 0$. So it suffices to check the subcase: there are exactly two $2$-vertices and no $6^{+}$-vertices. By the absence of $(2, 5^{-}, 2)$-paths, we may assume that $w_{0}$ and $w_{3}$ are $2$-vertices. Furthermore, we have that there are at most three $3$-vertices, for otherwise there is a $(3, 2, 3, 3)$-path on the boundary. Hence, the final charge is at least $7 - 4 - 2 \times 1 - 3 \times \frac{1}{3} = 0$. 

{\bf Let $\alpha = w_{0}w_{1}\dots w_{7}$ be an $8$-face.}  If $\alpha$ is incident with five $2$-vertices, then the other three are all $6^{+}$-vertices due to the absence of $(2, 5^{-}, 2)$-paths, and then the final charge is at least $8 - 4 + 3 \times \frac{1}{3} - 5 \times 1 = 0$. So we may assume that $\alpha$ is incident with at most four $2$-vertices. 

Suppose that there are two adjacent $2$-vertices on the boundary, say $w_{1}$ and $w_{2}$. By the absence of $(2, 2, 5^{-})$-paths, $w_{0}$ and $w_{3}$ are $6^{+}$-vertices. Hence, the final charge is at least $8 - 4 + 2 \times \frac{1}{3} - 4 \times 1 - 2 \times \frac{1}{3} = 0$. Next, suppose that no adjacent $2$-vertices are on the boundary. 

If $\alpha$ is incident with exactly four $2$-vertices, then $\alpha$ is a $(2, 6^{+}, 2, 6^{+}, 2, 6^{+}, 2, 6^{+})$-face, and then its final charge is at least $8 - 4 + 4 \times \frac{1}{3} - 4 \times 1 > 0$. If $\alpha$ is incident with exactly three $2$-vertices, then two $2$-vertices has distance two on the boundary, and then $\alpha$ is incident with a $6^{+}$-vertex, which implies the final charge is at least $8 - 4 + \frac{1}{3} - 3 \times 1 - 4 \times \frac{1}{3} = 0$. If $\alpha$ is incident with at most two $2$-vertices, then the final charge is at least $8 - 4 - 2 \times 1 - 6 \times \frac{1}{3} = 0$. 

{\bf Let $\alpha$ be a $9^{+}$-face.}  By the absence of $(2, 2, 5^{-})$-paths and $(2, 5^{-}, 2)$-paths, the face $\alpha$ sends at most $1 + 1 - \frac{1}{3} = 1 + \frac{1}{3} + \frac{1}{3} = \frac{5}{3}$ in total to any three consecutive vertices on the boundary. That is, $\alpha$ averagely sends $\frac{5}{9}$ to each incident vertex, and thus the final charge is at least $\deg(\alpha) - 4 - \deg(\alpha) \times \frac{5}{9} \geq 0$. 
\end{proof}

Very recently, Jendrol' \etal \cite{MR3431391} showed that if we replace "$(3, 3, 2, 3)$-path" with "$(2, 3, 3)$-path" in \autoref{Girth7}, then "$(2, 2, 5^{-})$-path" can be replaced with "$(2, 2, 3^{-})$-path". 

\begin{corollary}[Aksenov, Borodin and Ivanova \cite{MR3414174}]\label{OMEGA3}
If $G$ is a plane graph with $\delta(G) \geq 2$ and girth at least $7$, then $\omega_{3} \leq 9$. 
\end{corollary}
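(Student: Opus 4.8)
The plan is to obtain \autoref{OMEGA3} as an essentially immediate consequence of \autoref{Girth7}. The key point is that every one of the three unavoidable configurations produced by \autoref{Girth7} already contains a $3$-path of weight at most $9$, so once that theorem is applicable the bound $\omega_3 \le 9$ drops out. Thus I would split the argument into two tasks: first, show that the girth hypothesis allows us to invoke \autoref{Girth7}; second, extract a light $3$-path from each configuration. Throughout we may assume $G$ is connected, and since $\delta(G)\ge 2$ together with girth at least $7$ forces $G$ to contain cycles of length at least $7$, there certainly exist $3$-paths, so $\omega_3$ is well defined.

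For the first task I would show that a connected plane graph $G$ with $\delta(G)\ge 2$ and girth at least $7$ has every face of size at least $7$, which is precisely the hypothesis of \autoref{Girth7}. When $G$ is $2$-connected this is immediate, since then each facial walk is a cycle and hence has length at least $\girth(G)\ge 7$. In the presence of cut-vertices or bridges a facial walk may repeat vertices or edges, but it must still wind around some cyclic part of $G$: because $\delta(G)\ge 2$ rules out pendant vertices, no face can be bounded merely by a tree of bridges traversed back and forth, so every facial walk contains a cycle as a subwalk. That cycle has length at least $\girth(G)\ge 7$, and its edge-traversals are counted in the face size, whence $\deg(\alpha)\ge 7$ for every face $\alpha$ and \autoref{Girth7} applies.

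For the second task I would simply inspect the three outcomes of \autoref{Girth7}. A $(2,2,5^{-})$-path is itself a $3$-path of weight at most $2+2+5=9$; a $(2,5^{-},2)$-path is a $3$-path of weight at most $2+5+2=9$; and a $(3,3,2,3)$-path $v_1v_2v_3v_4$ contains the $3$-subpath $v_1v_2v_3$ of weight $3+3+2=8$ (equivalently $v_2v_3v_4$ of weight $3+2+3=8$). In every case $G$ has a $3$-path of weight at most $9$, so $\omega_3 \le 9$, as claimed.

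The computations in the second task are trivial, so the only genuine content is the reduction carried out in the first task. I expect the main obstacle to be the careful handling of cut-vertices and bridges when passing from ``girth at least $7$'' to ``face size at least $7$'': one must verify that the pendant-free hypothesis $\delta(G)\ge 2$ really does exclude short degenerate facial walks, rather than silently assuming $2$-connectivity. Once this is secured, \autoref{Girth7} does all of the remaining work.
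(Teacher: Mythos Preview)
Your proposal is correct and matches the paper's approach: the corollary is stated immediately after \autoref{Girth7} with no separate proof, so the paper treats it as a direct consequence of that theorem, exactly as you do. The reduction from ``girth at least $7$'' to ``every face has size at least $7$'' that you take care to justify is left implicit in the paper.
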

\begin{remark}
Combining \autoref{MAD14/5} and \autoref{Girth7}, the upper bound for $\omega_{3}$ in \autoref{OMEGA3} could be achieved on a $(2, 2, 5, 2)$-path. In \cite{MR3414174}, Aksenov, Borodin and Ivanova showed the sharpness of upper bound on $\omega_{3}$ by inserting two vertices on each edge of the icosahedron. 
\end{remark}

\begin{figure}
\ContinuedFloat
\centering
\subcaptionbox{\label{fig:subfig:P22INFTY}}{\includegraphics{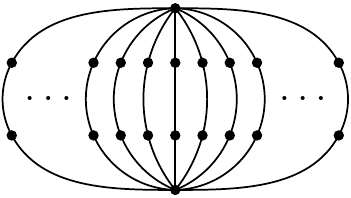}}
\end{figure}
Next, we consider the class of graphs with average degree less than $3$, which includes all the plane graphs with face size at least $6$. 
\begin{theorem}\label{MAD3}
If $G$ is a graph with $\delta(G) = 2$, average degree less than $3$ and without $(2, 2, \infty)$-triangles, then $G$ has one of the following configurations: a $(2, 3^{-}, 3^{-})$-path, a $(2, 2, \infty, 2)$-path, a $(2, 2, 4, 3)$-path, a $(4; 2, 2, 2, 6^{-})$-star, a $(4; 2, 2, 3, 5^{-})$-star, a $(4; 2, 3, 3, 3)$-star, a $(5; 2, 2, 2, 2, 2)$-star and a $(5; 2, 2, 2, 2, 3)$-star. 
\end{theorem}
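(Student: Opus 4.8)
The plan is to argue by contradiction with the discharging method, exactly in the spirit of the proofs of \autoref{MADTHM} and \autoref{MAD14/5}. Assume $G$ is a counterexample, so none of the eight listed configurations occurs and, by hypothesis, $G$ has no $(2,2,\infty)$-triangle. Assign to each vertex $v$ the initial charge $\deg(v)-3$; since the average degree is below $3$, we have $\sum_v(\deg(v)-3)=2|E(G)|-3|V(G)|<0$. I would then redistribute charge so that every vertex ends nonnegative, contradicting the negative total. Note $\rho=\frac12$ here, so a $2$-vertex must recover a full unit of charge, more than in \autoref{MADTHM}; this is what forces the richer rule set and the finer forbidden list.

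Before discharging I would extract the structural consequences of the forbidden configurations. Forbidding $(2,3^-,3^-)$-paths yields: every maximal thread has at most two internal $2$-vertices; a thread with two internal $2$-vertices has both endpoints of degree $\ge 4$; and every $3$-vertex has at most one $2$-neighbor, with its remaining neighbors of degree $\ge 4$. Forbidding $(2,2,\infty,2)$-paths yields that any vertex incident to a $2$-internal thread has that thread as its \emph{only} $2$-neighbor. The decisive constraints pin down the neighborhoods of $4$-vertices: the forbidden $(2,2,4,3)$-path forces a $4$-vertex incident to a $2$-internal thread to have its other three neighbors of degree $\ge 4$; and the stars $(4;2,3,3,3)$, $(4;2,2,3,5^-)$, $(4;2,2,2,6^-)$ force, respectively, that a $4$-vertex with one $2$-neighbor has at most two $3$-neighbors, a $4$-vertex with two $2$-neighbors and a $3$-neighbor has a $6^+$-neighbor, and a $4$-vertex with three $2$-neighbors has a $7^+$-neighbor. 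The $5$-stars $(5;2,2,2,2,2)$ and $(5;2,2,2,2,3)$ cap a $5$-vertex at four $2$-neighbors, with a $4^+$ fifth neighbor.

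I would then use the rules: (R1) each $2$-vertex receives $\frac12$ from each endpoint of its maximal thread; (R2) each $4^+$-vertex sends $\frac14$ to each adjacent $3$-vertex having a $2$-neighbor; (R3) each $6^+$-vertex sends $\frac14$ to each adjacent $4$-vertex with exactly two $2$-neighbors, and each $7^+$-vertex sends $\frac12$ to each adjacent $4$-vertex with three $2$-neighbors. Verifying $2$- and $3$-vertices is immediate: a $2$-vertex ends at $0$, and a $3$-vertex with a $2$-neighbor sends $\frac12$ but collects $\frac14$ from each of its two $4^+$-neighbors, again ending at $0$ (a $3$-vertex with no $2$-neighbor neither sends nor receives). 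The heart of the argument is the $4$-vertex analysis, split by the number $k\in\{0,1,2,3\}$ of its $2$-neighbors: in each case the structural facts above either bound its emitted charge by $1$, or guarantee a $6^+$- (resp. $7^+$-) neighbor whose (R3)-gift exactly covers the deficit, which is $\frac14$ when $k=2$ and $\frac12$ when $k=3$. One then checks that $5$-vertices never run a deficit (their emission is at most their charge $2$), and that every $\kappa$-vertex with $\kappa\ge 6$ can afford all of its gifts, the tight points being a $6$-vertex with six $1$-internal threads and a $7$-vertex incident to a $2$-internal thread.

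The main obstacle is precisely this $4$-vertex case analysis together with the calibration of (R3): the thresholds $4,6,7$ in the stars $(4;2,3,3,3)$, $(4;2,2,3,5^-)$, $(4;2,2,2,6^-)$ must line up exactly with the amounts a rescued $4$-vertex needs ($0,\frac14,\frac12$) and with what a $6^+$/$7^+$ neighbor can spare after serving its own threads. The subtle bookkeeping concerns $2$-internal threads, since such a thread drains a full unit from each endpoint through (R1); the $(2,2,\infty,2)$-restriction (limiting each endpoint to a single $2$-neighbor) together with the simple-graph and $(2,2,\infty)$-triangle hypotheses (ruling out degenerate threads whose two endpoints coincide) are exactly what keep these endpoints solvent.
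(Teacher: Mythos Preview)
Your proposal is correct and follows essentially the same discharging strategy as the paper: initial charge $\deg(v)-3$, the identical thread rule (R1), and a rescue of $3$-vertices by their $4^{+}$-neighbours followed by a case split on the number of $2$-neighbours of a $4$-vertex. The only difference is cosmetic: the paper's rules for $5^{+}$-vertices are uniform (a $5$-vertex sends $\tfrac14$ to every adjacent $3$-vertex, a $6$-vertex sends $\tfrac14$ to every adjacent $3^{+}$-vertex, a $7^{+}$-vertex sends $\tfrac12$ to every adjacent $3^{+}$-vertex), whereas your (R3) targets only the $4$-vertices that actually need help; both calibrations hit the same tight cases (the $6$-vertex with six $1$-threads and the $7$-vertex on a $2$-thread) and both close with the same arithmetic.
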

\begin{proof}
Suppose that all the configurations do not exist. We set the initial charge of every $\kappa$-vertex with $\kappa - 3$. Thus the sum of all the initial charge of vertices is less than zero. We design appropriate discharging rules to redistribute charge among the vertices, such that the final charge of each vertex is at least zero, and then the sum of the final charge is at least zero, which is a contradiction. 

\begin{enumerate}[label = (R\arabic*)]
\item Each $2$-vertex receives $\frac{1}{2}$ from each endpoint of the maximal thread containing it. 
\item Each $4$-vertex sends $\frac{1}{4}$ to each adjacent $3$-vertex that is adjacent to a $2$-vertex. 
\item Each $5$-vertex sends $\frac{1}{4}$ to each adjacent $3$-vertex. 
\item Each $6$-vertex sends $\frac{1}{4}$ to each adjacent $3^{+}$-vertex. 
\item Each $7^{+}$-vertex sends $\frac{1}{2}$ to each adjacent $3^{+}$-vertex.
\end{enumerate}

{\bf Let $w$ be a $3$-vertex.} If $w$ is adjacent to three $3^{+}$-vertices, then its final charge is at least zero. By the absence of $(2, 3^{-}, 3^{-})$-paths, $w$ is not an endpoint of a $(2, 2, 3)$-path, and we may also assume that $w$ is adjacent to exactly one $2$-vertex and two $4^{+}$-vertices, thus its final charge is at least $3 - 3 + 2 \times \frac{1}{4} - \frac{1}{2} = 0$. 

{\bf Let $w$ be a $4$-vertex.} If $w$ is an endpoint of a $(2, 2, \infty)$-path, then it sends two times $\frac{1}{2}$, and then its final charge is $4 - 3 - 2 \times \frac{1}{2} = 0$. So we may assume that $w$ is not an endpoint of a $(2, 2, \infty)$-path. By the absence of $(4; 2, 2, 2, 6^{-})$-stars, if $w$ is adjacent to three $2$-vertices, then it is adjacent to a $7^{+}$-vertex, and then its final charge is $4 - 3 + \frac{1}{2} - 3 \times \frac{1}{2} = 0$.  If $w$ is adjacent to exactly two $2$-vertices, then its final charge is at least $4 - 3 - 2\times \frac{1}{2} = 0$ or $4 - 3 + \frac{1}{4} - 2 \times \frac{1}{2} - \frac{1}{4} = 0$. By the absence of $(4; 2, 3, 3, 3)$-stars, the final charge is at least $4 - 3 - \frac{1}{2} - 2 \times \frac{1}{4} = 0$ if $w$ is adjacent to exactly one $2$-vertex and at most two $3$-vertices, or $4 - 3 - 4 \times \frac{1}{4} = 0$ if $w$ is adjacent to four $3^{+}$-vertices. 

{\bf Let $w$ be a $5$-vertex.} By the absence of $(2, 2, \infty, 2)$-paths, if $w$ is an endpoint of a $(2, 2, \infty)$-path, then it sends two times $\frac{1}{2}$, at most four times $\frac{1}{4}$, and then its final charge is at least $5 - 3 - 2 \times \frac{1}{2} - 4 \times \frac{1}{4} = 0$. 

Suppose that $w$ is not an endpoint of a $(2, 2, \infty)$-path. If $w$ is adjacent to at most three $2$-vertices, then its final charge is at least $5 - 3 - 3 \times \frac{1}{2} - 2 \times \frac{1}{4} = 0$. So we may assume that $w$ is adjacent to at least four $2$-vertices. By the absence of $(5; 2, 2, 2, 2, 2)$-stars and $(5; 2, 2, 2, 2, 3)$-stars, it is adjacent to a $4^{+}$-vertex, thus its final charge is at least $5 - 3 - 4 \times \frac{1}{2} = 0$. 

{\bf Let $w$ be a $6$-vertex.} By the absence of $(2, 2, \infty, 2)$-paths, if $w$ is an endpoint of a $(2, 2, \infty)$-path, then it sends two times $\frac{1}{2}$, at most five times $\frac{1}{4}$, and then its final charge is at least $6 - 3 - 2 \times \frac{1}{2} - 5 \times \frac{1}{4} > 0$. If $w$ is not an endpoint of a $(2, 2, \infty)$-path, then it sends at most $\frac{1}{2}$ to each adjacent vertex, and then its final charge is at least $6 - 3 - 6 \times \frac{1}{2} = 0$. 

{\bf Let $w$ be a $7^{+}$-vertex.} By the absence of $(2, 2, 2)$-paths and $(2, 2, \infty, 2)$-paths, $w$ sends at most $\deg(w) + 1$ times $\frac{1}{2}$, thus its final charge is at least $\deg(w) - 3 - (\deg(w) + 1) \times \frac{1}{2} \geq 0$.
\end{proof}

\begin{corollary}[Jendrol' and Macekov{\'a} \cite{MR3279266}]
If $G$ is a planar graph with $\delta(G) = 2$ and girth at least $6$, then $G$ has one of the following configurations: a $(2, 2, \infty)$-path, a $(2, 3, 5^{-})$-path, a $(2, 4, 3^{-})$-path and a $(2, 5, 2)$-path. 
\end{corollary}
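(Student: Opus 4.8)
The plan is to deduce this statement directly from \autoref{MAD3}, using the same girth-to-average-degree passage that produced \autoref{SparsePlane}. First I would translate the girth hypothesis into a bound on the average degree. By the inequality $(\girth(G) - 2)\,\mad(G) < 2\,\girth(G)$ quoted just before \autoref{SparsePlane}, a planar graph with girth at least $6$ satisfies $\mad(G) < \frac{2 \cdot 6}{6 - 2} = 3$, and since the average degree of any graph is at most its maximum average degree, the average degree of $G$ itself is less than $3$. Moreover, girth at least $6$ means $G$ has no triangles whatsoever, so the hypothesis ``without $(2, 2, \infty)$-triangles'' holds vacuously. Combined with the assumption $\delta(G) = 2$, all the hypotheses of \autoref{MAD3} are satisfied, so $G$ must contain one of the eight configurations listed there.

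The remaining content is a short check that each of those eight configurations already contains one of the four configurations in the present statement; this is the only substantive step. The three path-type configurations each expose a $(2, 2, \infty)$-path or a $(2, 3, 5^{-})$-path: a $(2, 3^{-}, 3^{-})$-path is either a $(2, 2, 3^{-})$-path, which is a $(2, 2, \infty)$-path, or a $(2, 3, 3^{-})$-path, which is a $(2, 3, 5^{-})$-path; and both a $(2, 2, \infty, 2)$-path and a $(2, 2, 4, 3)$-path contain a $(2, 2, \infty)$-path as an initial subpath. The star configurations centered at a $4$-vertex each contain a $2$-leaf together with a $3^{-}$-leaf, so the path through the center yields a $(2, 4, 3^{-})$-path: for $(4; 2, 2, 2, 6^{-})$ take two $2$-leaves, giving $(2, 4, 2)$, and for $(4; 2, 2, 3, 5^{-})$ and $(4; 2, 3, 3, 3)$ take a $2$-leaf and a $3$-leaf, giving $(2, 4, 3)$. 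Finally, both $(5; 2, 2, 2, 2, 2)$ and $(5; 2, 2, 2, 2, 3)$ have at least two $2$-leaves, and the path through the center joining two of them is a $(2, 5, 2)$-path. Thus every outcome of \autoref{MAD3} forces one of the four desired configurations.

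There is no genuine obstacle here; the proof is essentially bookkeeping. The one point requiring a little care is the direction of the reduction: \autoref{MAD3} provides the more restrictive configurations, and one must verify that each of them is \emph{contained in} (hence implies) a weaker configuration of the corollary, rather than the other way around. A second minor point is that \autoref{MAD3} is being applied to $G$ itself, so it is the average degree of $G$ — not merely that of some proper subgraph — that must be shown to lie below $3$; this is exactly why passing through $\mad(G) < 3$, rather than just bounding the average degree of a single witness subgraph, is the clean route.
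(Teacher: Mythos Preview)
Your proposal is correct and is precisely the intended derivation: the paper places this corollary immediately after \autoref{MAD3} with no separate proof, relying on exactly the girth--to--average-degree bound you quote and the routine check that each of the eight configurations in \autoref{MAD3} contains one of the four listed here. Your case analysis matches, and the two minor points you flag (direction of the reduction, and bounding the average degree of $G$ itself via $\mad(G)$) are handled correctly.
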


\begin{corollary}[Aksenov, Borodin and Ivanova \cite{MR3414174}]
If $G$ is a planar graph with $\delta(G) = 2$ and girth at least $6$, then $G$ has a $(2, 2, \infty, 2)$-path or $\omega_{3} \leq 9$. 
\end{corollary}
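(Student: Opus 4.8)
The plan is to derive this directly from \autoref{MAD3} by a short reduction followed by a finite case check, with no new discharging needed. First I would verify that $G$ satisfies the hypotheses of \autoref{MAD3}. Since $G$ has girth at least $6$ it is triangle-free, so in particular it contains no $(2,2,\infty)$-triangle. Using the inequality $(\girth(G)-2)\mad(G) < 2\girth(G)$ recorded just before \autoref{SparsePlane}, girth at least $6$ forces $\mad(G) < 3$, and hence the average degree of $G$ (indeed of every subgraph) is less than $3$. Together with the assumption $\delta(G) = 2$, this places $G$ in the scope of \autoref{MAD3}, which guarantees that $G$ contains at least one of the eight listed configurations.

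If that configuration is a $(2,2,\infty,2)$-path, then the first alternative of the corollary holds and we are done. Otherwise I would check, configuration by configuration, that each of the remaining seven contains a path on three vertices whose degree-sum is at most $9$, which is precisely the assertion $\omega_{3} \leq 9$. Concretely: a $(2,3^{-},3^{-})$-path already has weight at most $8$; a $(2,2,4,3)$-path contains the $3$-subpath $(2,2,4)$ of weight $8$; each of the $4$-stars $(4;2,2,2,6^{-})$ and $(4;2,2,3,5^{-})$ contains a leaf--center--leaf path of type $(2,4,2)$ of weight $8$, while $(4;2,3,3,3)$ contains one of type $(2,4,3)$ of weight $9$; and each of the $5$-stars $(5;2,2,2,2,2)$ and $(5;2,2,2,2,3)$ contains a $(2,5,2)$-path of weight exactly $9$. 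In every one of these cases a $3$-path of weight at most $9$ is present, so $\omega_{3} \leq 9$ and the second alternative holds.

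I do not expect a genuine obstacle here, since the whole argument is a reduction to \autoref{MAD3} plus a bounded inspection of eight configurations. The only point needing a little care is to extract an honest path on three distinct vertices inside each star configuration; this is immediate because every such center has at least two leaves, so the corresponding leaf--center--leaf path exists and, as the degree bookkeeping above shows, always has degree-sum at most $9$. Thus the statement follows without invoking any further structural result.
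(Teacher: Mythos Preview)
Your proposal is correct and matches the paper's approach: the corollary is stated immediately after \autoref{MAD3} with no separate proof, so the intended argument is exactly the reduction you give---apply the girth/$\mad$ inequality to land in the hypotheses of \autoref{MAD3}, and then observe that every listed configuration other than the $(2,2,\infty,2)$-path contains a $3$-path of degree-sum at most $9$. Your case check is accurate and complete.
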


Note that we can replace "a $(4; 2, 2, 3, 5^{-})$-star, a $(4; 2, 3, 3, 3)$-star" with "a $(2, 4, 3, 2)$-path, a $(2, 4, 3)$-triangle" in \autoref{MAD3}. It suffices to check the final charge of every $4$-vertex. So we immediately have the following result. 

\begin{theorem}
If $G$ is a graph with $\delta(G) = 2$, average degree less than $3$ and without $(2, 2, \infty)$-triangles, then $G$ has one of the following configurations: a $(2, 3^{-}, 3^{-})$-path, a $(2, 2, \infty, 2)$-path, a $(2, 2, 4, 3)$-path, a $(4; 2, 2, 2, 6^{-})$-star, a $(2, 4, 3, 2)$-path, a $(2, 4, 3)$-triangle, a $(5; 2, 2, 2, 2, 2)$-star and a $(5; 2, 2, 2, 2, 3)$-star. 
\end{theorem}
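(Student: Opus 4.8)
The plan is to recycle wholesale the discharging argument that proves \autoref{MAD3}: we keep the same initial charge $\deg(w)-3$ on each $\kappa$-vertex $w$ and the same five rules (R1)--(R5). The two theorems forbid an identical list of configurations except that the $(4;2,2,3,5^{-})$-star and the $(4;2,3,3,3)$-star have been traded for the $(2,4,3,2)$-path and the $(2,4,3)$-triangle, and in the proof of \autoref{MAD3} those two star-configurations are invoked only while bounding the charge that leaves a $4$-vertex. Hence the verifications that every vertex of degree $2$, $3$, $5$, $6$, or at least $7$ finishes with nonnegative charge transfer over verbatim, and the whole argument reduces to re-examining a single $4$-vertex $w$, whose initial charge is $1$.

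First I would record two structural consequences of the hypotheses that get used repeatedly. Because $(2,3^{-},3^{-})$-paths, and in particular $(2,2,2)$-paths, are absent, every maximal thread has at most two internal $2$-vertices, so $w$ can be a thread endpoint only of a $1$-thread or a $2$-thread. Because $(2,2,4,3)$-paths and $(2,2,\infty,2)$-paths are absent, if $w$ is the endpoint of a $2$-thread $a_{1}a_{2}$ then it has no other $2$-vertex neighbour (else a $(2,2,\infty,2)$-path) and no $3$-vertex neighbour (else the path $a_{1}a_{2}wu$ is a $(2,2,4,3)$-path); its remaining three neighbours are then all $4^{+}$-vertices, so $w$ only pays $2\times\frac12$ into the thread and ends with $4-3-1=0$.

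It then remains to treat the case in which every $2$-vertex adjacent to $w$ is isolated, so (R1) costs $w$ exactly $\frac12$ per adjacent $2$-vertex; let $t$ be the number of such neighbours. If $t=4$ we have a $(4;2,2,2,6^{-})$-star, which is excluded; if $t=3$ the same exclusion forces the fourth neighbour to be a $7^{+}$-vertex, so $w$ collects $\frac12$ by (R5) and ends at $1+\frac12-\frac32=0$. The new configurations enter for $1\le t\le 2$: here I claim no $3$-vertex neighbour $u$ of $w$ can itself be adjacent to a $2$-vertex $b$. Indeed, choosing a $2$-vertex $a$ adjacent to $w$, the walk $a\,w\,u\,b$ is a genuine $(2,4,3,2)$-path when $b\ne a$ and collapses to a $(2,4,3)$-triangle when $b=a$; both are forbidden. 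Consequently (R2) never fires at $w$ once $t\ge 1$, so $w$ pays only $t\times\frac12\le 1$ and ends with charge at least $0$. Finally, if $t=0$ then all four neighbours are $3^{+}$-vertices, $w$ pays at most $4\times\frac14=1$ by (R2), and again finishes nonnegative.

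The main obstacle is precisely the $t\in\{1,2\}$ analysis: one must check that the two new configurations, between them, rule out every way a $4$-vertex adjacent to a $2$-vertex could be forced to feed a needy $3$-vertex through (R2), and in particular one must not overlook the degenerate case $b=a$, in which the would-be forbidden $(2,4,3,2)$-path closes up into the forbidden $(2,4,3)$-triangle. Everything outside the $4$-vertex discussion is identical bookkeeping to \autoref{MAD3}.
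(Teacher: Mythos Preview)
Your proposal is correct and follows exactly the approach the paper intends: the paper's own proof is the single sentence ``It suffices to check the final charge of every $4$-vertex,'' and you have supplied precisely that verification, observing that when a $4$-vertex $w$ has at least one $2$-neighbour the absence of a $(2,4,3,2)$-path and a $(2,4,3)$-triangle prevents (R2) from ever firing at $w$. Your treatment of the degenerate case $b=a$ closing into the forbidden triangle is the one point that needs care, and you handle it correctly.
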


Next, we consider the class of graphs with average degree less than $\frac{10}{3}$, which includes all the plane graphs with face size at least $5$. 
\begin{theorem}\label{MAD10/3}
If $G$ is a graph with $\delta(G) \geq 2$ and average degree less than $\frac{10}{3}$, then $G$ has one of the following configurations: a $(2, 2, \infty)$-path, a $(2, 3, 6^{-})$-path, a $(3, 3, 3)$-path, a $(2, 4, 3^{-})$-path and a $(2, 9^{-}, 2)$-path. 
\end{theorem}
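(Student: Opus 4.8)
The plan is to argue by contradiction with the discharging method, exactly in the spirit of \autoref{MAD14/5} and \autoref{MAD3}. Assume $G$ contains none of the five listed configurations. Assign to each vertex $v$ the initial charge $\mu(v)=\deg(v)-\frac{10}{3}$; since the average degree is less than $\frac{10}{3}$, the total charge $\sum_v \mu(v)=2|E(G)|-\frac{10}{3}|V(G)|$ is strictly negative. I will redistribute charge so that every vertex finishes with nonnegative charge, contradicting this.

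First I would extract the structural consequences of forbidding the five paths. The absence of a $(2,2,\infty)$-path forces the $2$-vertices to form an independent set, so every $2$-vertex has two neighbors of degree at least $3$. The absence of a $(2,9^{-},2)$-path means every vertex of degree at most $9$ has at most one $2$-neighbor. The absence of a $(3,3,3)$-path means every $3$-vertex has at most one $3$-neighbor. The absence of a $(2,3,6^{-})$-path means that a $3$-vertex adjacent to a $2$-vertex (a \emph{type-A} $3$-vertex) has its other two neighbors of degree at least $7$; combined with the previous facts, a type-A $3$-vertex is adjacent to exactly one $2$-vertex and two $7^{+}$-vertices, whereas a $3$-vertex with no $2$-neighbor (a \emph{type-B} $3$-vertex) has at least two neighbors of degree at least $4$. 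Finally, the absence of a $(2,4,3^{-})$-path means a $4$-vertex adjacent to a $2$-vertex has all its other neighbors of degree at least $4$. A useful derived observation is that every $3$-neighbor of a vertex of degree in $\{4,5,6\}$ must be type-B, since a type-A $3$-vertex is only adjacent to $2$- and $7^{+}$-vertices.

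With these facts in hand I would use three rules: (R1) each $2$-vertex receives $\frac{2}{3}$ from each of its two neighbors; (R2) each $4^{+}$-vertex sends $\frac{1}{6}$ to each adjacent type-B $3$-vertex; (R3) each $7^{+}$-vertex sends $\frac{1}{2}$ to each adjacent type-A $3$-vertex. Then a $2$-vertex finishes at $2-\frac{10}{3}+2\cdot\frac{2}{3}=0$; a type-A $3$-vertex finishes at $3-\frac{10}{3}-\frac{2}{3}+2\cdot\frac{1}{2}=0$; and a type-B $3$-vertex finishes at least $3-\frac{10}{3}+2\cdot\frac{1}{6}=0$. For the senders: a $4$-vertex with a $2$-neighbor has no $3$-neighbor and finishes at $\frac{2}{3}-\frac{2}{3}=0$, while a $4$-vertex with no $2$-neighbor sends at most $4\cdot\frac{1}{6}$ and finishes nonnegative; $5$- and $6$-vertices have ample surplus. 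For a $\kappa$-vertex with $7\le\kappa\le9$, the key point is that it has at most one $2$-neighbor, so it sends at most $\frac{2}{3}+(\kappa-1)\cdot\frac{1}{2}$ and finishes at least $\frac{\kappa-7}{2}\ge 0$; for $\kappa\ge 10$ every outgoing transfer is at most $\frac{2}{3}$, so it finishes at least $\frac{\kappa-10}{3}\ge 0$.

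I expect the main obstacle to be the bookkeeping for the $7$- to $9$-vertices, which sit exactly at the boundary: they must supply a full unit of charge (two portions of $\frac{1}{2}$) to each adjacent type-A $3$-vertex via (R3), and the only reason they survive is that the $(2,9^{-},2)$-restriction caps their number of $2$-neighbors at one. Choosing the transfer rates $\frac{2}{3}$, $\frac{1}{2}$, $\frac{1}{6}$ so that type-A $3$-vertices are fully compensated while these medium-to-high-degree vertices remain nonnegative is the delicate part; once the five structural facts above are in place, everything else reduces to routine degree counting.
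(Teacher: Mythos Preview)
Your proposal is correct and follows essentially the same argument as the paper: the same initial charge $\deg(v)-\tfrac{10}{3}$, the same two-way split of $3$-vertices (your type-A/type-B are the paper's $3_1$/$3_0$), and the same three discharging rules with the same rates $\tfrac{2}{3}$, $\tfrac{1}{6}$, $\tfrac{1}{2}$. The case analysis and the final-charge verifications match the paper's proof line by line.
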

\begin{proof}
Suppose that all the configurations do not exist. We set the initial charge of every vertex $w$ with $\deg(w) - \frac{10}{3}$. Thus the sum of all the initial charge of vertices is less than zero. We design appropriate discharging rules to redistribute charge among the vertices, such that the final charge of each vertex is at least zero, and then the sum of the final charge is at least zero, which is a contradiction. 

A $3$-vertex is called a $3_{1}$-vertex (or $3_{0}$-vertex) if it is adjacent to exactly one (or zero) $2$-vertex. 
\begin{enumerate}[label = (R\arabic*)]
\item Each $2$-vertex receives $\frac{2}{3}$ from each endpoint of the maximal thread containing it. 
\item Each $3_{0}$-vertex receives $\frac{1}{6}$ from each adjacent $4^{+}$-vertex.
\item Each $3_{1}$-vertex receives $\frac{1}{2}$ from each adjacent $7^{+}$-vertex. 
\end{enumerate}

If $w$ is a $10^{+}$-vertex, then its final charge is at least $\deg(w) - \frac{10}{3} - \deg(w) \times \frac{2}{3} \geq 0$. It suffices to consider the $9^{-}$-vertices. Note that each $9^{-}$-vertex is adjacent to at most one $2$-vertex due to the absence of $(2, 9^{-}, 2)$-paths. 

{\bf Let $w$ be a $3$-vertex.} By the absence of $(2, 3, 2)$-paths, $w$ is a $3_{0}$-vertex or $3_{1}$-vertex. By the absence of $(3, 3, 3)$-paths, if $w$ is a $3_{0}$-vertex, then $w$ is adjacent to at least two $4^{+}$-vertices, and then its final charge is at least $3 - \frac{10}{3} + 2 \times \frac{1}{6} = 0$. By the absence of $(2, 3, 6^{-})$-paths, if $w$ is a $3_{1}$-vertex, then $w$ is adjacent to two $7^{+}$-vertices, and then its final charge is $3 - \frac{10}{3} + 2 \times \frac{1}{2} - \frac{2}{3} = 0$. 

{\bf Let $w$ be a $4$-vertex.} By the absence of $(2, 4, 3^{-})$-paths, if $w$ is adjacent to a $2$-vertex, then it is adjacent to three $4^{+}$-vertices, and its final charge is $4 - \frac{10}{3} - \frac{2}{3} = 0$. By the absence of $(2, 3, 4)$-paths, if $w$ is adjacent to four $3^{+}$-vertices, then all the adjacent $3$-vertices are $3_{0}$-vertices, and its final charge at least $4 - \frac{10}{3} - 4 \times \frac{1}{6} = 0$. 

{\bf Let $w$ be a $5$-vertex.} By the absence of $(2, 3, 6^{-})$-paths, $w$ is not adjacent to any $3_{1}$-vertex, so we have the final charge at least $5 - \frac{10}{3} - \frac{2}{3} - 4 \times \frac{1}{6} > 0$. 

{\bf Let $w$ be a $6$-vertex.} By the absence of $(2, 3, 6^{-})$-paths, $w$ is not adjacent to any $3_{1}$-vertex, so we have the final charge at least $6 - \frac{10}{3} - \frac{2}{3} - 5 \times \frac{1}{6} > 0$. 

If $w$ is a $7$-vertex, then its final charge is at least $7 - \frac{10}{3} - \frac{2}{3} - 6 \times \frac{1}{2} = 0$. If $w$ is an $8$-vertex, then its final charge is at least $8 - \frac{10}{3} - \frac{2}{3} - 7 \times \frac{1}{2} > 0$. If $w$ is a $9$-vertex, then its final charge is at least $9 - \frac{10}{3} - \frac{2}{3} - 8 \times \frac{1}{2} > 0$. 
\end{proof}
\begin{corollary}[Lebesgue \cite{MR0001903}]
Every normal plane map with girth at least $5$ has a $(3, 3, 3)$-path. \qed
\end{corollary}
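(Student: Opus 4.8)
The plan is to obtain this as an immediate consequence of \autoref{MAD10/3}, so the first task is to check that a normal plane map $G$ with $\girth(G) \geq 5$ satisfies both hypotheses of that theorem. The minimum degree condition is free: by definition a normal plane map has $\delta(G) \geq 3 \geq 2$. For the average degree, I would invoke the inequality $(\girth(G) - 2)\mad(G) < 2\girth(G)$ recorded just before \autoref{SparsePlane}. Since the function $g \mapsto \frac{2g}{g-2}$ has derivative $\frac{-4}{(g-2)^2} < 0$ and is therefore strictly decreasing for $g > 2$, from $\girth(G) \geq 5$ we obtain
\[
\mad(G) < \frac{2\girth(G)}{\girth(G) - 2} \leq \frac{2 \cdot 5}{5 - 2} = \frac{10}{3}.
\]
In particular the average degree of $G$, being at most $\mad(G)$, is also less than $\frac{10}{3}$, which is exactly the quantity the discharging argument in \autoref{MAD10/3} needs.

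With both hypotheses in hand, \autoref{MAD10/3} guarantees that $G$ contains one of its five configurations: a $(2, 2, \infty)$-path, a $(2, 3, 6^{-})$-path, a $(3, 3, 3)$-path, a $(2, 4, 3^{-})$-path, or a $(2, 9^{-}, 2)$-path. The key observation is that four of these five explicitly contain a vertex of degree exactly $2$. Because $\delta(G) \geq 3$, the map $G$ has no $2$-vertex whatsoever, so the $(2, 2, \infty)$-path, the $(2, 3, 6^{-})$-path, the $(2, 4, 3^{-})$-path, and the $(2, 9^{-}, 2)$-path are all impossible in $G$. The only surviving configuration is the $(3, 3, 3)$-path, which must therefore occur; this is precisely the desired conclusion.

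There is essentially no genuine obstacle here, since the result is a direct specialization of \autoref{MAD10/3} to the sparse, $2$-vertex-free regime of normal plane maps, and the elimination of the four unwanted configurations is purely a matter of reading off which ones survive the constraint $\delta(G) \geq 3$. The only point requiring any care is the average-degree estimate: one must use the strict monotonicity of $\frac{2g}{g-2}$ correctly so that the bound $\frac{10}{3}$ is seen to cover \emph{all} girths $g \geq 5$ rather than merely the boundary case $g = 5$. Everything beyond that is immediate.
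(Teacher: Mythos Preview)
Your argument is correct and is exactly what the paper intends: the corollary is marked with a \qed\ immediately after its statement, signaling that it follows directly from \autoref{MAD10/3} by noting that a normal plane map with girth at least~$5$ is a simple planar graph with $\delta(G)\geq 3$ and average degree below $\tfrac{10}{3}$, so the four configurations involving a $2$-vertex are excluded and only the $(3,3,3)$-path remains. The only superfluous part is the monotonicity digression; it suffices to plug the actual girth $g\geq 5$ into $\mad(G)<\tfrac{2g}{g-2}\leq\tfrac{10}{3}$.
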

Now, we consider the sharpness of \autoref{MAD10/3}. The graph in Fig.~\subref{fig:subfig:P22INFTY} contains only the configuration --- $(2, 2, \infty)$-path. The complete bipartite graph $K_{2, \kappa}$ with $\kappa \leq 9$ contains only the configuration --- $(2, \kappa, 2)$-path. The graph in Fig.~\subref{fig:subfig:P233} contains only the configuration --- $(2, 3, 3)$-path. The graph in Fig.~\subref{fig:subfig:P234} contains only the configuration --- $(2, 3, 4)$-path. The graph in Fig.~\subref{fig:subfig:P235} contains only the configuration --- $(2, 3, 5)$-path. The graph in Fig.~\subref{fig:subfig:P236} contains only the configuration --- $(2, 3, 6)$-path. The graph in Fig.~\subref{fig:subfig:P243} contains only the configuration --- $(2, 4, 3)$-path. Each $3$-regular graph only contains the configuration --- $(3, 3, 3)$-path. The upper bound on the average degree is also sharp because of the complete bipartite graph $K_{2, 10}$.

\begin{figure}%
\ContinuedFloat
\centering
\subcaptionbox{\label{fig:subfig:P233}}{\includegraphics[width = 0.15\textwidth]{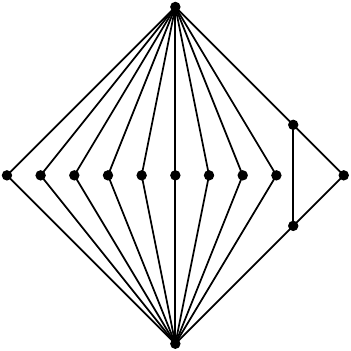}}\hfill~
\subcaptionbox{\label{fig:subfig:P234}}{\includegraphics[width = 0.15\textwidth]{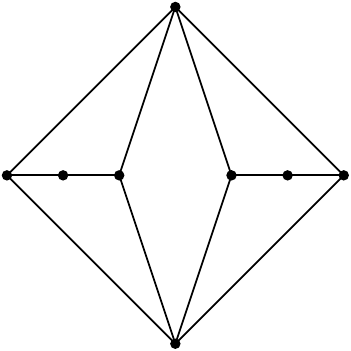}}\hfill~
\subcaptionbox{\label{fig:subfig:P235}}{\includegraphics[width = 0.15\textwidth]{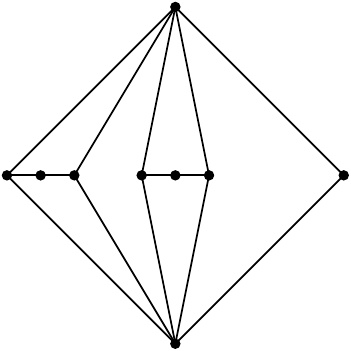}}\hfill~
\subcaptionbox{\label{fig:subfig:P236}}{\includegraphics[width = 0.15\textwidth]{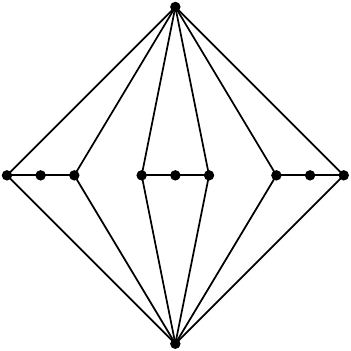}}\hfill~
\subcaptionbox{\label{fig:subfig:P243}}{\includegraphics[width = 0.15\textwidth]{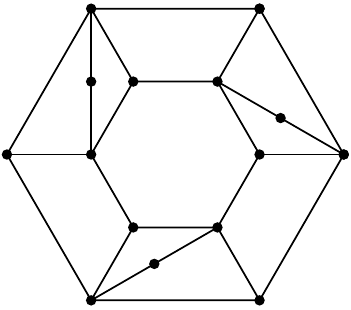}}
\end{figure}

\begin{theorem}
If $G$ is a graph with $\delta(G) = 3$ and average degree less than $4$, then $G$ contains a $(4^{-}, 3, 7^{-})$-path, or a $(5, 3, 5)$-path or a $(5, 3, 6)$-path. 
\end{theorem}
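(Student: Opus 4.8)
The plan is to argue by contradiction using the discharging method, exactly in the style of the preceding proofs. Suppose $G$ is a counterexample; we may assume $G$ is connected, and since the average degree is less than $4$, assigning the initial charge $\deg(w)-4$ to each vertex $w$ gives total charge $2|E(G)|-4|V(G)|<0$. All vertices have degree at least $3$, so the only vertices with negative initial charge are the $3$-vertices, each carrying $-1$; the whole task is to feed each $3$-vertex exactly one unit of charge from its neighbours while keeping every donor nonnegative.

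First I would extract the local structure forced by the three forbidden configurations. Fix a $3$-vertex $w$ and sort its neighbour degrees $d_1\le d_2\le d_3$. The absence of a $(4^-,3,7^-)$-path shows that if $d_1\le 4$ then $d_2,d_3\ge 8$ (type $B$), since any neighbour of degree at most $7$ paired with the $\le 4$ neighbour would create such a path. If instead $d_1=5$, then the absence of $(5,3,5)$- and $(5,3,6)$-paths forces $d_2,d_3\ge 7$ (type $A_1$). In the remaining case all three neighbours have degree at least $6$ (type $A_2$). A short check of these three types from the donor side shows: a $5$-vertex can only be adjacent to type-$A_1$ vertices, a $6$-vertex only to type-$A_2$ vertices, a $7$-vertex only to type-$A_1$ or type-$A_2$ vertices, and a type-$B$ vertex is fed only by its neighbours of degree at least $8$.

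With this in hand I would use rules in which a $3$-vertex collects charge only from neighbours of degree at least $5$, at a rate increasing with the donor's degree: a vertex of degree at least $8$ sends $\tfrac12$ to each adjacent $3$-vertex, a $7$-vertex sends $\tfrac37$, a $6$-vertex sends $\tfrac13$, and a $5$-vertex sends $\tfrac17$. The verification splits by recipient type: a type-$B$ vertex receives $\tfrac12+\tfrac12=1$ from its two $\ge 8$ neighbours; a type-$A_2$ vertex receives at least $3\cdot\tfrac13=1$; and a type-$A_1$ vertex receives at least $\tfrac17+\tfrac37+\tfrac37=1$ from its degree-$5$ neighbour and its two $\ge 7$ neighbours. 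On the donor side each rate is the largest sustainable one: a $5$-vertex spends at most $5\cdot\tfrac17=\tfrac57<1$, a $6$-vertex at most $6\cdot\tfrac13=2$, a $7$-vertex at most $7\cdot\tfrac37=3$, and a $\kappa$-vertex with $\kappa\ge 8$ at most $\tfrac{\kappa}{2}\le \kappa-4$. Thus every vertex ends with nonnegative charge, contradicting the negative total.

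The main obstacle I anticipate is the calibration of the rates rather than the structural analysis. The budgets of the $6$- and $7$-vertices are tight (they can afford exactly $2$ and $3$), so the rate sent by a $7$-vertex cannot be raised to $\tfrac12$; consequently two degree-$7$ neighbours of a type-$A_1$ vertex supply only $\tfrac67$, and the missing $\tfrac17$ must be drawn from the degree-$5$ neighbour. Verifying that a $5$-vertex can always afford this small contribution to each of its (necessarily type-$A_1$) neighbours is what makes the choice $\tfrac17$ essentially forced, and getting all of these inequalities to close simultaneously is the delicate point of the argument.
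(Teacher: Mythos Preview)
Your proposal is correct and follows essentially the same discharging argument as the paper: the same initial charge, the same three-way case split on the smallest neighbour degree of a $3$-vertex, and the same thresholds $8,7,6$ emerging from the forbidden paths. The only cosmetic difference is that the paper packages your rates into the single rule ``each $4^{+}$-vertex $w$ sends $\frac{\deg(w)-4}{\deg(w)}$ to every neighbour,'' which gives $\frac{1}{5},\frac{1}{3},\frac{3}{7},\frac{1}{2},\dots$ in place of your $\frac{1}{7},\frac{1}{3},\frac{3}{7},\frac{1}{2}$ and makes the donor check immediate.
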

\begin{proof}
Suppose that all the configurations do not exist. We set the initial charge of every $\kappa$-vertex with $\kappa - 4$. Thus the sum of all the initial charge of vertices is less than zero. We design an appropriate discharging rule to redistribute charge among the vertices, such that the final charge of each vertex is at least zero, and then the sum of the final charge is at least zero, which is a contradiction. 

\begin{itemize}
\item[(R)] Each $4^{+}$-vertex $w$ sends $\frac{\deg(w) - 4}{\deg(w)}$ to each adjacent vertex. 
\end{itemize}

It is obvious that each $4^{+}$-vertex has the final charge at least zero. Suppose that $w$ is a $3$-vertex which is adjacent to $w_{1}, w_{2}$ and $w_{3}$.  By symmetry, we may assume that $\deg(w_{1}) \leq \deg(w_{2}) \leq \deg(w_{3})$. If $w_{1}$ is a $4^{-}$-vertex, then both $w_{2}$ and $w_{3}$ are $8^{+}$-vertices, and then the final charge of $w$ is at least $3 - 4 + 2 \times \frac{1}{2} = 0$. If $w_{1}$ is a $5$-vertex, then both $w_{2}$ and $w_{3}$ are $7^{+}$-vertices, and then the final charge of $w$ is at least $3 - 4 + 2 \times \frac{3}{7} + \frac{1}{5} > 0$. If $w_{1}$ is a $6^{+}$-vertex, then $w$ is adjacent to three $6^{+}$-vertices, and then its final charge is at least $3 - 4 + 3 \times \frac{1}{3} = 0$. 
\end{proof}

\begin{corollary}[Borodin and Ivanova \cite{MR3357780}]
Every triangle-free NPM has a $(4^{-}, 3, 7^{-})$-path, or a $(5, 3, 5)$-path or a $(5, 3, 6)$-path. 
\end{corollary}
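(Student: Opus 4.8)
The plan is to derive this corollary directly from the preceding theorem by checking that every triangle-free normal plane map satisfies its two hypotheses: minimum degree exactly three and average degree strictly less than four. Since the conclusion of that theorem is already the list of paths we want, no additional combinatorial work is needed once the hypotheses are verified; the entire content of the argument is reducing the planar/NPM statement to the metric ``average degree'' statement, exactly as was done for the earlier corollaries of \autoref{MAD10/3}.

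First I would bound the average degree. Because $G$ is triangle-free it has no $3$-faces, so together with the defining property of a normal plane map (every face has size at least three) every face has size at least four. Assuming without loss of generality that $G$ is connected, Euler's formula $|V| - |E| + |F| = 2$ together with $2|E| = \sum_{\alpha} \deg(\alpha) \geq 4|F|$ gives $|F| \leq |E|/2$, hence $|V| - |E|/2 \geq 2$, i.e.\ $|E| \leq 2|V| - 4$. Therefore the average degree satisfies $\frac{2|E|}{|V|} \leq 4 - \frac{8}{|V|} < 4$. If $G$ is disconnected, the same inequality $2|E_i| < 4|V_i|$ holds on each component, and summing shows the overall average degree $\frac{2\sum_i |E_i|}{\sum_i |V_i|}$ is still below four.

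Next I would pin down the minimum degree. By definition a normal plane map has $\delta(G) \geq 3$. If every vertex had degree at least four, then $2|E| = \sum_{v} \deg(v) \geq 4|V|$ would force the average degree to be at least four, contradicting the bound just obtained. Hence some vertex has degree exactly three, so $\delta(G) = 3$. With both hypotheses in place, the preceding theorem applies and yields one of a $(4^{-}, 3, 7^{-})$-, a $(5, 3, 5)$-, or a $(5, 3, 6)$-path, which is precisely the claim.

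The only genuinely delicate point, and the step I expect to be the main obstacle, is the mismatch between the ambient setting of the theorem (simple graphs, by the paper's global convention) and that of a normal plane map (a plane multigraph permitting loops and parallel edges). I would resolve it by noting that the theorem's proof is a pure vertex-discharging computation whose rule (R) and case analysis depend only on vertex degrees and on the nonexistence of the listed degree-constrained paths; nothing in it uses simplicity, so the argument transfers verbatim to plane multigraphs, and it is exactly the hypotheses $\delta(G)=3$ and average degree $<4$ that we have just established for $G$. As a sanity check one also observes that the face-size-at-least-four constraint already forbids loops and empty digons, so the multigraph features that survive are extremely limited; but the cleanest route is simply to invoke the discharging argument directly on the multigraph, which closes the gap and completes the proof.
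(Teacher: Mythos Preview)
Your proposal is correct and follows the paper's intended route: the corollary is stated without proof immediately after the theorem, so the derivation is exactly the hypothesis verification you give---face sizes at least four in a triangle-free NPM force average degree below four via Euler, and this together with $\delta\geq 3$ forces $\delta=3$. Your explicit treatment of the simple-versus-multigraph discrepancy already goes beyond what the paper itself supplies.
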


\begin{theorem}\label{THMLast}
If $G$ is a triangle-free normal plane map, then it contains one of the following configurations: a $(3, 3, 3)$-path, a $(3, 3, 4)$-path, a $(3, 3, 5, 3)$-path, a $(4, 3, 4)$-path, a $(4, 3, 5)$-path, a $(5, 3, 5)$-path, a $(5, 3, 6)$-path and a $(3, 4, 3)$-path. 
\end{theorem}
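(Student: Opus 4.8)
The plan is to argue by contradiction using Euler's formula together with the discharging method, now distributing charge between both vertices \emph{and} faces, since the planar structure has to be exploited beyond the mere average-degree bound already used in the preceding corollary. Assume $G$ is a connected counterexample containing none of the listed configurations, and assign to each vertex $v$ the charge $\mu(v) = \deg(v) - 4$ and to each face $\alpha$ the charge $\mu(\alpha) = \deg(\alpha) - 4$. By Euler's formula the total charge equals $-8$. Because $G$ is triangle-free every face has size at least four, so $\mu(\alpha) \geq 0$; moreover every $4^{+}$-vertex has nonnegative charge, so the only deficient elements are the $3$-vertices, each carrying $-1$. The aim is to redistribute charge so that every final charge is nonnegative, contradicting the total $-8$.

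First I would read off the local structure of a $3$-vertex $v$ from the six forbidden $3$-paths centered at a $3$-vertex. Writing its neighbour degrees as $d_1 \leq d_2 \leq d_3$, the absence of $(3,3,3)$ and $(3,3,4)$ forces at most one neighbour of degree $3$ and, when one is present, the other two of degree at least $5$; the absence of $(4,3,4)$, $(4,3,5)$, $(5,3,5)$, $(5,3,6)$ then pins the profile down to exactly one of $(3,5,\geq 7)$, $(3,\geq 6,\geq 6)$, $(4,\geq 6,\geq 6)$, $(5,\geq 7,\geq 7)$, or $(\geq 6,\geq 6,\geq 6)$. A short computation shows that under the natural rule in which each $\kappa$-vertex with $\kappa \geq 5$ sends $\frac{\kappa-4}{\kappa}$ to every adjacent $3$-vertex, the last two profiles already collect at least $1$ (for instance $(5,\geq 7,\geq 7)$ yields at least $\frac15+\frac37+\frac37>1$, and $(\geq 6,\geq 6,\geq 6)$ yields $3\cdot\frac13=1$), so these $3$-vertices are self-sufficient. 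The remaining \emph{light} profiles, those possessing a neighbour of degree $3$ or $4$, run a deficit of at most $\frac{13}{35}$ (attained by $(3,5,7)$), and covering this deficit is exactly where the faces must enter.

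Accordingly I would add face rules: each $6^{+}$-vertex off-loads its surplus onto its incident faces, each face then passes a fixed amount of about $\frac13$ to every incident light $3$-vertex, and the lone $(3,5,\geq 7)$ pattern is topped up through its degree-$5$ neighbour directly. Checking that every $5^{+}$-vertex and every face of size at least five remains nonnegative is routine bookkeeping. The hard part will be the $4$-faces, which start with charge zero: one must show that each $4$-face receives from its high-degree corners at least as much as it pays out to its $3$-corners. This is precisely the role of the two extra configurations in the statement. The forbidden $(3,4,3)$ prevents a short boundary arc from carrying two consecutive light corners without an adjacent paying vertex, while the forbidden $(3,3,5,3)$ controls the interaction between a $3$--$3$ edge and an incident $5$-vertex, so that a needy $3$-vertex cannot be encircled entirely by non-paying corners. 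Closing this $4$-face accounting, together with the self-sufficiency of the heavy profiles, makes all final charges nonnegative and contradicts the total charge $-8$, completing the proof.
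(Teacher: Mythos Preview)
Your plan is essentially the paper's proof: the same charges $\deg-4$ on vertices and faces, the same five neighbour-profiles for a $3$-vertex, $6^{+}$-vertices donating to faces, faces feeding the $3$-vertices, and the $4$-face ledger as the crux. Two points in your sketch need sharpening, however. First, the paper does not split $3$-vertices into ``light'' and ``heavy'' with separate feeding mechanisms; instead, $6^{+}$-vertices send $\tfrac{\kappa-4}{\kappa}$ \emph{only} to incident faces, and \emph{every} $3$-vertex is fed through its incident faces (each $5^{+}$-face gives $\tfrac12$, and on a $4$-face the charge received from the high-degree corners is explicitly routed to the $3$-corners). As you wrote it, once the $6^{+}$-vertices switch to face-donation your heavy profiles $(5,\ge 7,\ge 7)$ and $(\ge 6,\ge 6,\ge 6)$ are left uncovered, since you only let faces pay light $3$-vertices. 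Second, you misplace the role of the forbidden $(3,3,5,3)$-path: it is not primarily a $4$-face constraint but the reason your $(3,5,\ge 7)$ top-up is affordable. The paper calls such a $3$-vertex a $3^{*}$-vertex, and the absence of $(3,3,5,3)$ says exactly that a $5$-vertex adjacent to a $3^{*}$-vertex has no other $3$-neighbour, so it can hand over its full surplus of $1$ to that single neighbour. The same ban is also what forces the two non-$3$ corners of a $5$-face carrying three $3$-vertices to be $6^{+}$-vertices; the $4$-face accounting itself uses only $(3,4,3)$ together with the profile constraints you already derived.
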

\begin{proof}
Suppose that all the configurations are avoidable in a triangle-free normal plane map $G$. Further, we may assume that $G$ is connected. A $3$-vertex is called a $3^{*}$-vertex if it is adjacent to a $3$-vertex and a $5$-vertex. By the absence of $(3, 3, 5, 3)$-paths, if a $5$-vertex $w$ is adjacent to at least two $3$-vertices, then no adjacent $3$-vertex is a $3^{*}$-vertex. 

Initially, we assign each vertex $w$ the charge $\deg(w) - 4$ and each face $\alpha$ the charge $\deg(\alpha) - 4$. Thus, the sum of the initial charge is $-8$. We then design the following discharging rules to redistribute the charge, in the way that the final charge of each vertex and each face is nonnegative, which derives a contradiction. 
\begin{enumerate}[label = (R\arabic*)]
\item Each $6^{+}$-vertex $w$ sends $\frac{\deg(w) - 4}{\deg(w)}$ to each incident face. 
\item If a $5$-vertex $w$ is adjacent to a $3^{*}$-vertex, then $w$ sends $1$ to this $3^{*}$-vertex. 
\item If a $5$-vertex $w$ is not adjacent to any $3^{*}$-vertex, then $w$ sends $\frac{1}{5}$ to each adjacent vertex. 
\item Each $3$-vertex receives $\frac{1}{2}$ from each incident $5^{+}$-face. 
\item Let $w_{1}w_{2}w_{3}w_{4}$ be a $4$-face and $w_{3}$ be a $3$-vertex but not a $3^{*}$-vertex. If $w_{1}$ is a $3$-vertex and $w_{2}$ is a $4^{+}$-vertex, then the charge received from $w_{2}$ is evenly transferred to $w_{1}$ and $w_{3}$; and if $w_{1}$ and $w_{2}$ are all $4^{+}$-vertices, then the charge received from $w_{2}$ is transferred to $w_{3}$. 
\end{enumerate}

\bigskip
By the absence of $(3, 3, 3)$-paths, each face $\alpha$ is incident with at most $\left\lfloor \frac{2\deg(\alpha)}{3}\right\rfloor$ $3$-vertices. If $\alpha$ is a $6^{+}$-face, then its final charge is at least $\deg(\alpha) - 4 - \left\lfloor \frac{2\deg(\alpha)}{3}\right\rfloor \times \frac{1}{2} \geq \frac{2\deg(\alpha)}{3} - 4 \geq 0$. If a $5$-face is incident with at most two $3$-vertices, then its final charge is at least $5 - 4 - 2 \times \frac{1}{2} = 0$. Let $\alpha = w_{0}w_{1}w_{2}w_{3}w_{4}$ be a $5$-face with three incident $3$-vertices. By symmetry, we may assume that $w_{0}, w_{2}, w_{3}$ are $3$-vertices. By the absence of $(3, 3, 3)$-, $(3, 3, 4)$, $(3, 3, 5, 3)$-paths, it follows that $w_{1}$ and $w_{4}$ are all $6^{+}$-vertices, thus $\alpha$ receives at least $\frac{1}{3}$ from each of $w_{1}$ and $w_{4}$, and then its final charge is at least $5 - 4 + 2 \times \frac{1}{3} - 3 \times \frac{1}{2} = \frac{1}{6}$. Note that the initial charge of each $4$-face is zero. By (R5), the final charge of each $4$-face is nonnegative. 

Now, we consider the final charge of vertices. By (R1), the final charge of each $6^{+}$-vertex is nonnegative. If a $5$-vertex is adjacent to at least two $3$-vertices, then its final charge is at least $5 - 4 - 5 \times \frac{1}{5} = 0$; and if a $5$-vertex is adjacent to at most one $3$-vertex, then its final charge is at least $5 - 4 - 1 = 0$. Note that $4$-vertices are not involved in the discharging rules, thus its final charge is zero. So it suffices to consider $3$-vertices. 

Let $v$ be a $3$-vertex with three adjacent vertices $v_{1}, v_{2}$ and $v_{3}$. Without loss of generality, we may assume that $\deg(v_{1}) \leq \deg(v_{2}) \leq \deg(v_{3})$. Suppose that $v_{1}$ is a $3$-vertex. If $v$ is a $3^{*}$-vertex, then its final charge is at least $3 - 4 + 1 = 0$. So we may assume that $v$ is not a $3^{*}$-vertex. By the absence of $(3, 3, 3)$-, $(3, 3, 4)$-, $(3, 3, 5, 3)$-paths, both $v_{2}$ and $v_{3}$ are $6^{+}$-vertices. Recall that $v$ receives $\frac{1}{2}$ from each incident $5^{+}$-face. If $v_{1}vv_{2}w_{1}$ is a $4$-face, then $w_{1}$ is a $4^{+}$-vertex due to the absence of $(3, 3, 3)$-paths, and then $v_{2}$ contributes at least $\frac{1}{3}$ to $v$ via the $4$-face. In other words, $v$ receives at least $\frac{1}{3}$ via the face with face angle $v_{1}vv_{2}$ whenever the face is a $4^{+}$-face. Symmetrically, $v$ receives at least $\frac{1}{3}$ via the face with face angle $v_{1}vv_{3}$. If $v_{2}vv_{3}w_{2}$ is a $4$-face, then it receives at least $\frac{1}{6} + \frac{1}{6}$ from $v_{2}$ and $v_{3}$ via this $4$-face. In summary, $v$ receives at least $\frac{1}{3}$ from each incident face, thus its final charge is at least $3 - 4 + 3 \times \frac{1}{3} = 0$. 

Suppose that $v_{1}$ is a $4$-vertex. By the absence of $(4, 3, 4)$-paths and $(4, 3, 5)$-paths, it follows that $v_{2}$ and $v_{3}$ are $6^{+}$-vertices. By the absence of $(3, 4, 3)$-paths, if $v_{1}vv_{2}w_{1}$ is a $4$-face, then $w_{2}$ is a $4^{+}$-vertex, and then $v$ receives at least $\frac{1}{3}$ from $v_{2}$ via this $4$-face. Similar to the previous case that $v_{1}$ is a $3$-vertex, $v$ receives at least $\frac{1}{3}$ from each incident face in any situation, so the final charge is at least $3 - 4 + 3 \times \frac{1}{3} = 0$. 

Suppose that $v_{1}$ is a $5$-vertex. By the absence of $(5, 3, 5)$-paths and $(5, 3, 6)$-paths, both $v_{2}$ and $v_{3}$ are $7^{+}$-vertices. Note that $v_{1}$ is not adjacent to any $3^{*}$-vertex. The vertex $v$ receives $\frac{1}{2}$ or at least $\frac{3}{14} + \frac{3}{14}$ from the face with face angle $v_{2}vv_{3}$, receives $\frac{1}{2}$ or at least $\frac{3}{14}$ from each of the other two incident faces, and receives $\frac{1}{5}$ from $v_{1}$. Hence, the final charge of $v$ is at least $3 - 4 + \frac{3}{7} + 2 \times \frac{3}{14} + \frac{1}{5} > 0$. 

Suppose that $v_{1}$ is a $6^{+}$-vertex. The vertex $v$ receives $\frac{1}{2}$ or at least $\frac{1}{6} + \frac{1}{6}$ from each incident face, thus its final charge is at least $3 - 4 + 3 \times \frac{1}{3} = 0$. 
\end{proof}
\begin{corollary}[Borodin and Ivanova \cite{MR3357780}]\label{LC}
Every triangle-free normal plane map has a $(5^{-}, 3, 6^{-})$-path or a $(3, 4^{-}, 3)$-path. 
\end{corollary}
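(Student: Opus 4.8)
The plan is to derive the corollary directly from \autoref{THMLast}, which guarantees that every triangle-free normal plane map contains one of eight listed configurations; I will verify that each of those eight is, or contains, either a $(5^{-}, 3, 6^{-})$-path or a $(3, 4^{-}, 3)$-path. Since a path may be read in either direction, a $3$-path $xyz$ qualifies as a $(5^{-}, 3, 6^{-})$-path exactly when its centre $y$ is a $3$-vertex, both endpoints have degree at most $6$, and at least one endpoint has degree at most $5$. The whole argument is thus a finite inspection of the eight types, requiring no further structural information about $G$.

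First I would dispose of the six configurations that are already $3$-paths centred at a $3$-vertex, namely $(3, 3, 3)$, $(3, 3, 4)$, $(4, 3, 4)$, $(4, 3, 5)$, $(5, 3, 5)$ and $(5, 3, 6)$. In each the centre has degree $3$; the two endpoints have degree at most $5$ in the first five types, and have degrees $5$ and $6$ in the last type. In every case both endpoints have degree at most $6$ and at least one has degree at most $5$, so each of these six is a $(5^{-}, 3, 6^{-})$-path.

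Next I would treat the single configuration on four vertices, the $(3, 3, 5, 3)$-path $w_{1}w_{2}w_{3}w_{4}$. Rather than use the whole path, I would pass to its $3$-path subpath $w_{1}w_{2}w_{3}$, whose degree sequence is $3, 3, 5$: the centre $w_{2}$ is a $3$-vertex and the endpoints have degrees $3$ and $5$, both at most $5$. Hence this configuration also contains a $(5^{-}, 3, 6^{-})$-path. The remaining configuration is the $(3, 4, 3)$-path, whose centre is a $4$-vertex with both neighbours of degree $3$; as $4 \le 4$, this path is exactly a $(3, 4^{-}, 3)$-path.

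Putting these cases together, whichever configuration \autoref{THMLast} produces, $G$ contains a $(5^{-}, 3, 6^{-})$-path or a $(3, 4^{-}, 3)$-path, which establishes the corollary. There is no genuine obstacle beyond tracking the degree inequalities carefully; the only mildly nonroutine step is recognising that the four-vertex configuration $(3, 3, 5, 3)$ must first be cut down to the $3$-path $w_{1}w_{2}w_{3}$ before it matches the language of the corollary.
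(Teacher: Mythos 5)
Your proposal is correct and matches the paper's intent exactly: the corollary is stated as an immediate consequence of \autoref{THMLast}, obtained by checking that each of the eight unavoidable configurations is, or contains, a $(5^{-}, 3, 6^{-})$-path or a $(3, 4^{-}, 3)$-path, which is precisely your case inspection (including truncating the $(3,3,5,3)$-path to its $(3,3,5)$-subpath).
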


\section{Concluding remarks}
Note that we have different notations for the set of configurations. Some authors use $(a_{1}, a_{2}, \dots, a_{k})$ to denote the paths $v_{1}v_{2}\dots v_{\kappa}$ with $\deg(v_{1}) \leq a_{1}$, $\deg(v_{2}) \leq a_{2}, \dots, \deg(v_{\kappa}) \leq a_{\kappa}$; in our notations, this is denoted by $(a_{1}^{-}, a_{2}^{-}, \dots, a_{k}^{-})$. In \cite{MR3431391}, Jendrol' \etal defined the optimal unavoidable set $\mathcal{A}$ for a family $\mathcal{F}$. Formally, an unavoidable set $\mathcal{A}$ of paths is {\em optimal} for the family $\mathcal{F}$ if neither the type can be omitted from $\mathcal{A}$, nor any parameter $a_{i}$ of any type $(a_{1}^{-}, a_{2}^{-}, \dots, a_{k}^{-})$ from $\mathcal{A}$ can be decreased. In other words, they showed the optimality by providing an example with the configuration $(a_{1}, a_{2}, \dots, a_{\kappa})$ but no other configurations. In fact, $(a_{1}^{-}, a_{2}^{-}, \dots, a_{k}^{-})$ is a class of graphs, not just one graph. For example, $(4, 3, 6)$-path is a $(5^{-}, 3, 6^{-})$-path, and $(4, 3, 6)$-paths can be avoidable in the triangle-free normal plane map due to \autoref{THMLast}, but $(5^{-}, 3, 6^{-})$-path is an unavoidable configuration in the triangle-free normal plane map due to \autoref{LC}. 

In \cite{BorodinIvanovaKostochka2016}, Borodin \etal introduced an irreducible optimal unavoidable set of $3$-paths. Formally, for the set of types of $3$-paths $\mathcal{A} = \{(x_{1}^{-}, y_{1}^{-}, z_{1}^{-}), (x_{2}^{-}, y_{2}^{-}, z_{2}^{-}), \dots, (x_{\kappa}^{-}, y_{\kappa}^{-}, z_{\kappa}^{-})\}$, let $D(\mathcal{A})$ be a set of all triples $(x^{-}, y^{-}, z^{-})$ such that there exists a triplet $(x_{i}^{-}, y_{i}^{-}, z_{i}^{-})$ with $x \leq x_{i}, y \leq y_{i}$ and $z \leq y_{i}$. The optimal unavoidable set of types of 3-paths $\mathcal{A}$ is called {\em irreducible} if there is no other optimal unavoidable set of types of $3$-paths $\mathcal{A}'$ with $D(\mathcal{A}') \subsetneq D(\mathcal{A})$. Note that $D(\mathcal{A})$ is downwards according to the definition of irreducible optimal unavoidable set. We cannot guarantee that every member $A$ in $D(\mathcal{A})$ is unavoidable, that is, we cannot guarantee that there exists an example containing $A$ but no the other member in $D(\mathcal{A})$. So I think the best unavoidable set has the form $\{(x_{1}, y_{1}, z_{1}), (x_{2}, y_{2}, z_{2}), \dots, (x_{\kappa}, y_{\kappa}, z_{\kappa})\}$, such that no member in the set can be omitted and there exists an example for each member $A$ such that it contains $A$ but no the other member in the set. Please see a best unavoidable set in \autoref{MAD10/3}. Note that the best unavoidable set may be not unique.

\vskip 3mm \vspace{0.3cm} \noindent{\bf Acknowledgments.} This project was supported by the National Natural Science Foundation of China (11101125) and partially supported by the Fundamental Research Funds for Universities in Henan.


\begin{thebibliography}{10}

\bibitem{MR3414174}
V.~A. Aksenov, O.~V. Borodin and A.~O. Ivanova, Weight of 3-paths in sparse
  plane graphs, Electron. J. Combin. 22~(3) (2015) \# P3.28.

\bibitem{Ando1993}
K.~Ando, S.~Iwasaki and A.~Kaneko, Every 3-connected planar graph has a
  connected subgraph with small degree sum, in: Annual Meeting of the
  Mathematical Society of Japan, 1993.

\bibitem{MR977440}
O.~V. Borodin, On the total coloring of planar graphs, J. Reine Angew. Math.
  394 (1989) 180--185.

\bibitem{MR3357780}
O.~V. Borodin and A.~O. Ivanova, Describing tight descriptions of 3-paths in
  triangle-free normal plane maps, Discrete Math. 338~(11) (2015) 1947--1952.

\bibitem{MR3106442}
O.~V. Borodin, A.~O. Ivanova, T.~R. Jensen, A.~V. Kostochka and M.~P. Yancey,
  Describing 3-paths in normal plane maps, Discrete Math. 313~(23) (2013)
  2702--2711.

\bibitem{BorodinIvanovaKostochka2016}
O.~V. Borodin, A.~O. Ivanova and A.~V. Kostochka, Tight description of 3-paths
  in normal plane maps, J. Graph Theory  (2016) in press.

\bibitem{MR1485387}
S.~Jendrol', A structural property of convex {$3$}-polytopes, Geom. Dedicata
  68~(1) (1997) 91--99.

\bibitem{MR1739913}
S.~Jendrol', Note: a short proof of {K}otzig's theorem on minimal edge weights
  of convex 3-polytopes, in: International {S}cientific {C}onference on
  {M}athematics. {P}roceedings (\v {Z}ilina, 1998), Univ. \v Zilina, \v Zilina,
  1998, pp. 35--38.

\bibitem{MR3279266}
S.~Jendrol' and M.~Macekov{\'a}, Describing short paths in plane graphs of
  girth at least 5, Discrete Math. 338~(2) (2015) 149--158.

\bibitem{JendrolMacekovaMontassierEtAl2016}
S.~Jendrol', M.~Macekov{\'a}, M.~Montassier and R.~Sot\'{a}k, 3-path in graphs
  with bounded average degree, Discuss. Math. Graph Theory 36~(2) (2016)
  339--353.

\bibitem{MR3431391}
S.~Jendrol', M.~Macekov{\'a}, M.~Montassier and R.~Sot{\'a}k, Optimal
  unavoidable sets of types of 3-paths for planar graphs of given girth,
  Discrete Math. 339~(2) (2016) 780--789.

\bibitem{MR3004475}
S.~Jendrol' and H.-J. Voss, Light subgraphs of graphs embedded in the plane---a
  survey, Discrete Math. 313~(4) (2013) 406--421.

\bibitem{MR0074837}
A.~Kotzig, Contribution to the theory of {E}ulerian polyhedra, Mat.-Fyz. \v
  Casopis. Slovensk. Akad. Vied 5 (1955) 101--113.

\bibitem{MR0001903}
H.~Lebesgue, Quelques cons\'equences simples de la formule d'{E}uler, J. Math.
  Pures Appl. 19 (1940) 27--43.

\bibitem{MR3448602}
T.~Wang, Strongly light subgraphs in the 1-planar graphs with minimum degree 7,
  Ars Math. Contemp. 8~(2) (2015) 409--416.

\end{thebibliography}
\end{document}